\documentclass{amsart}

\vfuzz2pt 
\usepackage{graphicx, color}

\def\blue{\color{black}}

\newtheorem{thm}{Theorem}[section]
\newtheorem{cor}[thm]{Corollary}
\newtheorem{lem}[thm]{Lemma}
\newtheorem{prop}[thm]{Proposition}
\theoremstyle{definition}

\theoremstyle{remark}
\newtheorem{rem}[thm]{Remark}
\numberwithin{equation}{section}


\begin{document}

\title[]
 {Existence and applications of Ricci flows via pseudolocality}

\author{Fei He}

\address{School of Mathematical Science, Xiamen University}

\email{hefei@xmu.edu.cn}

\thanks{}

\thanks{}

\subjclass{}

\keywords{}

\date{}

\dedicatory{}

\commby{}


\begin{abstract}
We prove the short-time existence of Ricci flows on complete manifolds with scalar curvature bounded below uniformly, Ricci curvature bounded below by a negative quadratic function, and with almost Euclidean isoperimetric inequality holds locally. In particular, this result applies to manifolds with both Ricci curvature and injectivity radius bounded from below. We also study the short-time behaviour of these solutions which may have unbounded curvature at the initial time, and provide some applications. A key tool is Perelman's pseudolocality theorem.
\end{abstract}

\maketitle

\section{Introduction}

Since its introduction by Hamilton \cite{hamilton1982three}, the Ricci flow has been extensively studied and very fruitful in yielding deep results, although there are still many questions to be answered. One of the many open problems about the Ricci flow is the existence of solutions on noncompact manifolds. In fact, existence of the flow on general complete manifolds is expected to be not true, and we would like to restrict our attention to manifolds with curvature conditions relevant to potential applications. When the sectional curvature is uniformly bounded, it follows from the work of Shi \cite{shi1989deforming} that the Ricci flow has a unique solution. When the curvature is unbounded, progress has been made by many authors under various assumptions, for an incomplete list, see Cabezas-Rivas and Wilking \cite{cabezas2011produce}, Chau, Li and Tam \cite{chau2014deforming}, Giesen and Topping \cite{giesen2011existence}\cite{giesen2013ricci}, Hochard \cite{hochard2016short}, Simon \cite{simon2002deformation}\cite{simon2012ricci}, Topping \cite{topping2010ricci}, Xu \cite{xu2013short}, and the author of this article \cite{he2016existence}. {\blue For more recent progress, one can see for example  \cite{Bamler2019Ricci} \cite{Lai2019Ricci} \cite{lee2017chern}and \cite{Chau2020The}.}
In this article, we study the existence of Ricci flows on complete noncompact manifolds with a set of assumptions motivated by Perelman's pseudolocality theorem. We focus on dimension $n\geq 3$ since the $2$-dimensional case has been settled in \cite{topping2010ricci} and \cite{giesen2011existence}, which also make use of pseudolocality. We also explore the properties of these solutions and point out some applications.

It is well-known that for any regular domain $\Omega$ in the Euclidean space $\mathbb{R}^n$, we have the \emph{Euclidean isoperimetric inequality}
\[ |\partial \Omega|^n \geq I_n |\Omega|^{n-1}, \]
where $I_n = \frac{|\partial B_{\mathbb{R}^n}(0,1)|^n}{|B_{\mathbb{R}^n}(0,1)|^{n-1}}$ is the optimal constant.
On a Riemannian manifold $(M^n,g)$, let $B_g(p,r)$ be a geodesic ball with radius $r$ centered at a point $p$. We say the \emph{$\delta-$almost Euclidean isoperimetric inequality} holds in $B_g(p,r)$ with respect to the Riemannian metric $g$, if
\[(Area_g(\partial \Omega))^n \geq (1-\delta)I_n (Vol_g(\Omega))^{n-1}\]
for any regular domain $\Omega \subset B_g(p,r)$. In \cite{perelman2002entropy}, Perelman proved an interior curvature estimate for compact Ricci flows known as the pseudolocality theorem. The complete noncompact case has been verified by Chau, Tam and Yu \cite{chau2779131pseudolocality}. See also \cite{chow2010ricci} for a detailed treatment. {\blue There is another version of pseudolocality estimate for the Ricci flow proved by Tian-Wang \cite{Tian2015On}, where almost nonnegative Ricci curvature is assumed, while the almost Euclidean isoperimetric inequality assumption is relaxed to almost Euclidean volume. }

\begin{thm}[Perelman's pseudolocality]\label{thm: pseudolocality}
For every $n$ and $A > 0$, there exist $\delta_0 >0$ and $\epsilon_0 >0$ depending only on $A $ and $n$ with the following property: Suppose $(M^n, g(t)), t\in [0, (\epsilon r)^2]$ is a complete solution of the Ricci flow with bounded curvature, where $0< \epsilon \leq \epsilon_0$ and $r>0$. Let $x_0$ be a point in $M$. If there is a scalar curvature lower bound
\[S(x,0) \geq -r^{-2} \quad for \quad any \quad x \in B_{g(0)}(x_0, r),\]
and if the $\delta_0-$almost Euclidean isoperimetric inequality holds in $B_{g(0)}(x_0,r)$ with respect to the initial metric $g(0)$, then we have
\[|Rm|(x,t) \leq \frac{A}{t}+\frac{1}{(\epsilon_0 r)^2}\]
for $x\in B_{g(t)}(x_0, \epsilon_0 r)$ and $t\in (0, (\epsilon r)^2]$.
\end{thm}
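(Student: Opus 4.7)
\emph{Proof proposal.} The plan is to argue by contradiction, combining a point-picking/blow-up argument with the monotonicity of Perelman's localized $\W$-entropy, where the almost Euclidean isoperimetric hypothesis enters through a near-sharp logarithmic Sobolev inequality at the initial time.

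Suppose the conclusion fails for some fixed $A$. Then, for sequences $\eps_k,\delta_k \to 0$, there exist counterexample Ricci flows $(M_k^n,g_k(t))$, $t\in[0,(\eps_k r_k)^2]$, with base points $x_0^k$ satisfying the scalar curvature lower bound and the $\delta_k$-almost Euclidean isoperimetric inequality at scale $r_k$, yet with
\[ |Rm|_{g_k(t_k)}(y_k,t_k) > \frac{A}{t_k}+\frac{1}{(\eps_0 r_k)^2} \]
for some $y_k\in B_{g_k(t_k)}(x_0^k,\eps_0 r_k)$ and $t_k\in(0,(\eps_k r_k)^2]$. Parabolic rescaling lets me normalize $r_k=1$. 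A Perelman-style point-selection lemma then yields a ``good'' point $(\bar y_k,\bar t_k)$ at which $Q_k:=|Rm|(\bar y_k,\bar t_k)$ is large while $|Rm|\le 4Q_k$ holds on a parabolic backward neighbourhood of scale $Q_k^{-1/2}$. Rescaling by $Q_k$ and translating $\bar t_k\mapsto 0$, Hamilton's compactness theorem produces a subsequential smooth pointed limit Ricci flow $(M_\infty,g_\infty(s),\bar y_\infty)$ on a long backward time interval with $|Rm|_{g_\infty}(\bar y_\infty,0)=1$.

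To rule out this limit I would exploit Perelman's $\W$-functional. The $\delta_0$-isoperimetric assumption on $B_{g_k(0)}(x_0^k,1)$ yields, via the standard equivalence between isoperimetric and logarithmic Sobolev inequalities, a log-Sobolev constant on this ball within $o(1)$ of the sharp Euclidean constant as $\delta_k\to 0$. Consequently the localized $\mu$-invariant over compactly supported test functions on the ball is at least $-o(1)$ for a suitable scale parameter $\tau_0$. Let $u_k$ be a fundamental solution of the conjugate heat equation based at $(\bar y_k,\bar t_k)$, cut off near the boundary of the good parabolic region; monotonicity of $\W$ along the conjugate heat flow, combined with cutoff estimates, transfers the localized log-Sobolev estimate backward in time to $t=0$ and forces $\W$ evaluated at $(\bar y_k,\bar t_k)$ to be at most $o(1)$. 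Passing to the rescaled limit, this produces a test function on $(M_\infty,g_\infty(0))$ at which the $\W$-entropy is $\le 0$; the rigidity case of the Euclidean logarithmic Sobolev inequality then forces $(M_\infty,g_\infty(0))$ to be isometric to $\Real^n$ with the flat metric, contradicting $|Rm|_{g_\infty}(\bar y_\infty,0)=1$.

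The principal obstacle is the localization step: the isoperimetric hypothesis is assumed only on the initial ball, whereas the concentrated test function is required at the blow-up point $(\bar y_k,\bar t_k)$ on the evolved metric. Controlling the error produced by the spatial cutoff as the conjugate heat kernel is evolved backward, without destroying the near-Euclidean log-Sobolev estimate, is the technical heart of the argument and relies on a priori gradient bounds for the backward heat kernel together with the point-picking control of curvature on the relevant parabolic region.
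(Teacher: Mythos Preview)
The paper does not prove this theorem. Theorem~\ref{thm: pseudolocality} is stated with attribution to Perelman \cite{perelman2002entropy} (with the complete noncompact extension due to Chau--Tam--Yu \cite{chau2779131pseudolocality}), and is used throughout the paper as a black box. So there is no ``paper's own proof'' to compare against; your sketch is really a proposal for Perelman's original argument.

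Regarding the sketch itself: the entropy\,/\,log-Sobolev component is correct in spirit and is indeed the heart of Perelman's proof. However, the step where you invoke Hamilton's compactness theorem to extract a smooth pointed limit $(M_\infty,g_\infty(s),\bar y_\infty)$ is a genuine gap. Hamilton compactness requires, in addition to uniform curvature bounds, a uniform lower bound on the injectivity radius at the base point (equivalently, $\kappa$-noncollapsing at the relevant scale). No such bound is available here: noncollapsing (Lemma~\ref{lem: Perelman's short-time noncollapsing} in this paper) is a \emph{consequence} of pseudolocality, not an input to its proof. Without it, the rescaled sequence could collapse and no smooth limit need exist, so the ``rigidity forces the limit to be $\mathbb{R}^n$'' conclusion is unsupported.

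Perelman's actual argument avoids taking any Cheeger--Gromov limit. After the point-picking step, one works directly with the conjugate heat kernel $u=(4\pi(\bar t-t))^{-n/2}e^{-f}$ based at $(\bar y,\bar t)$ and Perelman's pointwise differential Harnack inequality $v:=[(\bar t - t)(2\Delta f-|\nabla f|^2+S)+f-n]\,u\le 0$. Integrating $v$ against a suitable cutoff at $t=0$ and invoking the near-sharp log-Sobolev inequality (from the $\delta_0$-isoperimetric hypothesis) gives a lower bound $\int hv\,d\mu(0)\ge -\beta$ with $\beta$ small; on the other hand, the Harnack inequality together with the large-curvature information at $(\bar y,\bar t)$ and the cutoff monotonicity forces $\int hv\,d\mu(0)\le -\eta$ for a definite $\eta>0$. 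The contradiction is obtained entirely at the level of these integrals, with no compactness or limit extraction. Your final paragraph already identifies the right technical difficulty (controlling the cutoff errors for the backward heat kernel); you should reorganize the argument so that this estimate yields the contradiction directly, and delete the blow-up limit step.
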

The validity of the $\delta-$almost Euclidean isoperimetric inequality under a fixed radius is actually a strong condition, roughly speaking it rules out too much positive curvature. Nevertheless, it does not require any point-wise curvature upper bound. An example is given by the neighbourhood of a rounded out flat cone point, with cone angle close to $2\pi$ so it is $C^0-$close to a Euclidean disk, while the sectional curvature can be made arbitrarily large and positive, and the injectivity radius can be arbitrarily small.

On the other hand, bounded curvature and a volume lower bound for all unit geodesic balls can imply the validity of the $\delta-$almost Euclidean isoperimetric inequality. By the work of Anderson and Cheeger \cite{anderson1992c}, a lower bound of both the Ricci curvature and the injectivity radius implies a lower bound of the $W^{1,p}-$harmonic radius, hence also implies this condition. {\blue A recent result of Cavalletti and Mondino shows that a Ricci lower bound together with almost optimal volume for unit balls imply the almost Euclidean isoperimetric inequality \cite{cavalletti2017almost}.}

Perelman's pseudolocality is not really a local result since it assumes a complete Ricci flow with bounded curvature, the completeness is necessary. Nevertheless, we can apply it to prove the short-time existence of Ricci flow solutions, with possibly unbounded curvature at the initial time. The key step is a conformal transformation which turns a compact domain into a complete Riemannian manifold, while keeping the scalar curvature lower bound and the isoperimetric inequality.
\begin{thm}\label{thm: short-time existence}
For any $n>0$, $A > 0$, $k\geq 0$ and $L\geq 0$, there exists constants $\delta_1>0$ and $\epsilon_1$ depending only on $n$ and $A$, such that if $(M^n,g)$ is a complete Riemannian manifold satisfying

 (i) $\liminf_{d(p,x)\to \infty} \frac{Ric(x)}{d(p,x)^2} \geq - L$, where $d(p,x)$ is the geodesic distance function from a fixed point $p\in M$,

  (ii) scalar curvature $S\geq -k$,

  (iii) the $\delta_1-$almost Euclidean isoperimetric inequality holds in any geodesic ball with radius $r>0$,

then $(M,g)$ admits a complete solution of the Ricci flow with curvature bound
\[|Rm(x,t)| \leq \frac{A}{t} + \frac{1}{(\epsilon_1 \bar{r})^2},\]
for all $x\in M$ and $0<t\leq (\epsilon_1 \bar{r})^2$, where {\blue $\bar{r}=\min\{r, \sqrt{1/(L+1)k}\}$,  this solution may also depend on $r$}. Moreover, $g(t)$ is $\kappa(n)-$noncollapsed under the scale $\sqrt{t}$, for any $0<t\leq (\epsilon_1 \bar{r})^2$.
\end{thm}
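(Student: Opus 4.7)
The plan is to approximate $(M,g)$ by Ricci flows on a sequence of complete bounded-curvature manifolds, and to use Theorem \ref{thm: pseudolocality} to extract uniform interior estimates which survive the limit. I fix an exhaustion $\Omega_1 \Subset \Omega_2 \Subset \cdots$ of $M$ by smoothly bounded open domains and, on each $\Omega_j$, construct a conformal factor $e^{2f_j}$ with $f_j \equiv 0$ on a large interior subdomain $\widetilde{\Omega}_j$ and $f_j(x) \to +\infty$ as $x \to \partial \Omega_j$, so that $\tilde g_j := e^{2 f_j} g$ is complete on $\Omega_j$ with bounded curvature. The blow-up profile of $f_j$ near $\partial \Omega_j$ is tuned so that the scalar curvature
\[\widetilde S_j = e^{-2 f_j}\bigl(S - 2(n-1)\Delta f_j - (n-1)(n-2)|\nabla f_j|^2\bigr)\]
remains bounded below by $-k$; the exponential prefactor $e^{-2 f_j}$ must dominate the negative $|\nabla f_j|^2$ contribution, which it will if the profile is chosen carefully (a choice such as $f_j \sim 1/\rho_j$ in terms of the $g$-distance $\rho_j$ to $\partial \Omega_j$ is a natural candidate). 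Since $f_j$ vanishes on $\widetilde{\Omega}_j$, the $\delta_1$-almost Euclidean isoperimetric inequality in $g$-balls of radius $\bar r$ centred in $\widetilde{\Omega}_j$ is automatically preserved under the conformal change.

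Shi's theorem \cite{shi1989deforming} then yields a complete bounded-curvature Ricci flow $\tilde g_j(t)$ on $\Omega_j$. For any basepoint $x_0 \in \widetilde{\Omega}_j$, the hypotheses of Theorem \ref{thm: pseudolocality} are met at scale $\bar r$ (the scalar-curvature lower bound $S \geq -k \geq -\bar r^{-2}$ comes from (ii) together with the definition of $\bar r$), provided $\delta_1 \leq \delta_0(A,n)$ and $\epsilon_1 \leq \epsilon_0(A,n)$. Pseudolocality then gives
\[|\widetilde{Rm}_j|(x,t) \leq \frac{A}{t} + \frac{1}{(\epsilon_1 \bar r)^2}, \qquad x \in B_{\tilde g_j(t)}(x_0, \epsilon_0 \bar r),\; 0 < t \leq (\epsilon_1 \bar r)^2,\]
together with $\kappa(n)$-noncollapsedness at scale $\sqrt t$ in the same parabolic region. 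Varying $x_0$ over $\widetilde{\Omega}_j$ yields the same bound throughout $\widetilde{\Omega}_j$, uniformly in $j$.

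With these estimates in hand I pass to the limit. The curvature bound, together with Shi's derivative estimates and the $\kappa$-noncollapsing, furnishes uniform $C^\infty$ control on compact space--time subsets and a uniform injectivity-radius lower bound at each positive time. Hamilton's compactness theorem therefore extracts from $(\widetilde{\Omega}_j, \tilde g_j(t), p_j)$ a subsequential Cheeger--Gromov--Hamilton limit which is a Ricci flow $g(t)$ on all of $M$ with $g(0)=g$, inheriting the stated curvature bound and the $\kappa(n)$-noncollapsing. Geodesic completeness of $g(t)$ for $t>0$ follows from the curvature bound: integrating $|\partial_t g| \leq 2|\mathrm{Ric}|$ gives $C^{-1} g \leq g(t) \leq C g$, so completeness of $(M,g)$ is inherited. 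Condition (i), the negative quadratic Ricci lower bound at infinity, is used to control distance distortion at infinity and to prevent the basepoint sequence from escaping, and is reflected in the parameter $L$ inside the scale $\bar r$.

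The main obstacle is the conformal construction in the first step: the demands of (a) completing the manifold, (b) keeping a uniform scalar-curvature lower bound, and (c) preserving the isoperimetric inequality in balls of radius $\bar r$ are in natural tension, since completing $\Omega_j$ forces $|\nabla f_j|$ to be very large near $\partial \Omega_j$. The exponential damping $e^{-2 f_j}$ in the scalar-curvature formula is what reconciles these requirements, but identifying a workable profile -- and verifying that it leaves enough room in the interior for the isoperimetric inequality and for Theorem \ref{thm: pseudolocality} to be applied at every relevant centre -- is the technical heart of the argument. Once that ingredient is in place, the remaining steps form a standard combination of Shi's theorem, pseudolocality, and Hamilton compactness.
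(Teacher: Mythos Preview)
Your overall architecture --- conformally complete an exhausting sequence of domains, run Shi's flow, apply pseudolocality, take a Cheeger--Gromov--Hamilton limit --- matches the paper. But there is a genuine gap in how you use pseudolocality, and it is exactly the point you flag as ``the technical heart'' without resolving.

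You only verify the scalar-curvature lower bound and the isoperimetric inequality on the interior $\widetilde{\Omega}_j$ where $f_j\equiv 0$, and then apply Theorem~\ref{thm: pseudolocality} only at centres $x_0\in\widetilde{\Omega}_j$. That yields curvature control on $\widetilde{\Omega}_j$ \emph{for as long as Shi's flow exists} --- but Shi's existence time is $\sim 1/K_j$, where $K_j$ is the global curvature bound of $\tilde g_j$ on $\Omega_j$, and $K_j\to\infty$ as $j\to\infty$. To extract a uniform lifespan (the paper's Lemma~\ref{lem:lifespan via pseudolocality}) you must bound $|Rm(\tilde g_j(t))|$ at \emph{every} point of the complete manifold $(\Omega_j,\tilde g_j)$, which forces you to verify the pseudolocality hypotheses in the conformal transition region as well. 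That the isoperimetric inequality survives there is not automatic at all; it is the main content of the paper's Lemma~\ref{lem: existence of a good conformal metric}, and requires a delicate choice of profile together with a quantitative comparison of $h$-balls and $g$-balls in the region where $f$ is nonconstant.

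Relatedly, your proposed profile $f_j\sim 1/\rho_j$ based on distance to $\partial\Omega_j$ does not come with a Laplacian bound: the scalar-curvature formula contains a $\Delta_g f_j$ term, and $\Delta_g\rho_j$ is uncontrolled without geometric input on the boundary. The paper avoids this by building $f$ out of the Schoen--Yau distance-like function $\gamma$ (Lemma~\ref{lemma: distance-like function}), which satisfies $|\Delta\gamma|\le C_0\sqrt{l}$ under a Ricci lower bound. This is precisely where hypothesis~(i) enters --- not, as you suggest, to ``control distance distortion at infinity,'' but to furnish a smooth exhaustion function with controlled Laplacian so that both the scalar-curvature lower bound and the isoperimetric inequality can be pushed through the conformal change on the whole of $U_\rho$. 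Finally, your completeness argument for $g(t)$ via $|\partial_t g|\le 2|\mathrm{Ric}|$ does not work as stated, since $|\mathrm{Ric}|\le A/t$ is not integrable at $t=0$; the paper instead obtains completeness directly from the limiting construction.
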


Recall that we say a Riemannian manifold $(M,g)$ is $\kappa-$noncollapsed under the scale $\rho$ if
\[\frac{Vol_gB_g(r)}{r^n} \geq \kappa, \quad for \quad all \quad 0< r< \rho.\]
When the initial manifold has bounded curvature, the metrics under a smooth Ricci flow solution are uniformly equivalent to the initial metric, so there is no worry about drastic distance distortion or volume collapsing within a short time. This is no longer clear when the curvature is only bounded by a non-integrable function $\frac{A}{t}$. However, the solutions from Theorem \ref{thm: short-time existence} are noncollapsed for any positive time close enough to $0$, hence enjoy the compactness property on positive time intervals in the sense of Hamilton \cite{hamilton1995compactness}.

{\blue
As an interesting special case, we obtain the following corollary from the above theorem and \cite{cavalletti2017almost}.
\begin{cor}\label{cor: Ricci and injectivity radius bounded below}
For any dimenion $n\geq 2$, any constants $A>0$, $\epsilon > 0$, there exist constants $\eta(n, A, \epsilon)$ and $\tau(n, A, \epsilon)$, such that the following holds. Let $(M,g)$ be a complete $n$-dimension smooth Riemannian manifold with the following conditions:

(1) there is a constant $l \geq 0$, such that $Ric \geq -l g$,

(2) $Vol_g B_g(x, r) \geq (1-\eta) \omega_n r^n$ for any $r \in [0, 1]$.

\noindent 
Then there is a complete Ricci flow solution $(M,g(t))$, $t\in [0,\tau/l]$, with $g(0)=g$, satisfying properties:

(i) $|Rm|(x, t) \leq \frac{A}{t} + \frac{l}{\tau}$;

(ii) $Vol_{g(t)}B_{g(t)}(x, \sqrt{t}) \geq (1-\epsilon)\omega_n(\sqrt{t})^n$;

\noindent where $x\in M$ and $t\in (0,\tau/l]$.
\end{cor}
\begin{rem}
It has been proved in \cite{hochard2016short} and \cite{Simon2016Local} that in dimension $3$, the almost Euclidean volume condition can be weakened to volume noncollapsing. 
\end{rem}
\begin{rem}\label{rem: injectivity radius lower bound}
By a theorem of Anderson-Cheeger \cite{anderson1992c}, Ricci lower bound and positive injectivity radius lower bound imply a lower bound for the harmonic radius, hence the almost Euclidean isoperimetric inequality (and alomst Euclidean volume growth) holds within a certain radius, therefore the above theorem can be applied in this case.  
\end{rem}
}

We would like to list some direct applications of our existence result. First recall that a curvature lower bound and a $C^0$ (almost) optimal condition can imply rigidity of Riemannian manifolds. For example, consider a complete Riemannian manifold $(M^n,g)$ with nonnegative Ricci curvature, it follows directly from volume comparison theorem that if the volume growth rate is (exactly) Euclidean, then $(M^n,g)$ is isometric to the Euclidean space $\mathbb{R}^n$; if the volume growth is almost optimal, Cheeger and Colding showed in \cite{cheeger1997structure} that the manifold is diffeomorphic to $\mathbb{R}^n$. Our first corollary is in the same spirit.
\begin{cor}\label{cor: rigidity scalar nonnegative}
Let $(M^n,g)$ be a complete Riemannian manifold. Suppose

(i) $\liminf_{d(p,x) \to \infty }\frac{Ric(x)}{d(p,x)^2} > -\infty$,

(ii) the scalar curvature $S \geq 0$,

(iii) the optimal Euclidean isoperimetric inequality holds on $M$.

Then $(M,g)$ is isometric to the Euclidean space $\mathbb{R}^n$.
\end{cor}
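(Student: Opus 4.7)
The plan is to apply Theorem~\ref{thm: short-time existence} with the free parameter $A$ taken arbitrarily small and then exploit the $A\to 0$ limit of the resulting flows. The underlying principle is that condition~(iii), being the \emph{optimal} Euclidean isoperimetric inequality, is a fortiori $\delta_1$-almost Euclidean for every $\delta_1>0$, so Theorem~\ref{thm: short-time existence} is applicable for every $A>0$.

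Conditions (i), (ii), (iii) fit the hypotheses of Theorem~\ref{thm: short-time existence} with $L$ supplied by (i), $k=0$ from (ii), and the radius $r>0$ arbitrary from (iii). Since $k=0$, we have $\bar{r}=r$, so sending $r\to\infty$ yields, for every $A>0$, a complete Ricci flow $g^{(A)}(t)$ on $M\times(0,T]$ with $T>0$ arbitrary, initial metric $g$, curvature bound $|Rm(g^{(A)}(t))|\leq A/t$, and $\kappa(n)$-noncollapsing at scale $\sqrt{t}$. On each slab $[t_0,T]$ with $t_0>0$ the curvature is bounded by $A/t_0$, so the standard maximum principle applied to $\partial_t S=\Delta S+2|Ric|^2\geq\Delta S$ propagates the initial inequality $S\geq 0$ to all positive times. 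Independently, integrating the sharp isoperimetric inequality as the ODE $V'(r)\geq (I_n)^{1/n}V(r)^{(n-1)/n}$, where $V(r)=Vol_g(B_g(p,r))$, gives the Euclidean volume lower bound $V(r)\geq|B_{\mathbb{R}^n}(0,r)|$ for every $r>0$.

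The core rigidity step is the limit $A\to 0$. The uniform $\kappa(n)$-noncollapsing and the $A$-small curvature bound put the pointed family $(M,g^{(A)}(t),p)$ in the setting of Hamilton's compactness theorem, and any subsequential $C^\infty$-limit $(M_\infty,g^{(\infty)}(t),p_\infty)$ satisfies $|Rm(g^{(\infty)}(t))|\equiv 0$; hence it is a complete flat Ricci flow, necessarily static. A careful analysis of the limit as $t\to 0^+$ identifies $(M_\infty,g^{(\infty)})$ with $(M,g)$, forcing $g$ itself to be flat, in particular Ricci-flat. Combining $Ric\equiv 0$ with the Euclidean lower bound $V(r)\geq|B_{\mathbb{R}^n}(0,r)|$ and the matching Bishop upper bound $V(r)\leq|B_{\mathbb{R}^n}(0,r)|$ produces equality in Bishop's comparison theorem at every scale, and the rigidity case yields the isometry $(M,g)\cong\mathbb{R}^n$.

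The principal obstacle lies in the joint passage $A\to 0$ and $t\to 0^+$: because the flows $g^{(A)}$ may a priori vary with $A$, identifying the Hamilton limit with $(M,g)$ requires either a uniqueness statement for Theorem~\ref{thm: short-time existence}'s solution (so that a single flow carries $|Rm|\leq A/t$ for every $A>0$ and is therefore itself flat), or an $A$-uniform distance distortion estimate near $t=0$ that permits the interchange of limits. Either route should follow naturally from the pseudolocality-based construction underlying the existence theorem.
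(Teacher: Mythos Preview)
Your approach is essentially the paper's: apply the existence theorem for every $A>0$, use $k=0$ to send $r\to\infty$ and get $|Rm|\leq A/t$ on $(0,\infty)$, then let $A\to 0$ to produce a flat limit, and conclude from the Euclidean volume growth forced by the sharp isoperimetric inequality. (The propagation of $S\geq 0$ along the flow is not needed for the argument.)

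On the obstacle you flag at the end: the paper sidesteps it more simply than either of your two proposed routes. Instead of taking a pointed Cheeger--Gromov limit on an abstract $(M_\infty,p_\infty)$ and then having to identify it with $(M,g)$, the paper exploits the fact that all the flows $g^{(A)}$ live on the \emph{same} manifold $M$ with the \emph{same} initial metric $g$. B.-L.~Chen's interior curvature estimate together with the modified Shi estimates---exactly the pair invoked at the end of the proof of Theorem~\ref{thm: short-time existence}---give $C^\infty$ bounds on $K\times[0,T]$ for every compact $K\subset M$ and every $T>0$ that depend only on the fixed initial data $g$ and not on $A$. Hence the convergence is in $C^\infty_{\mathrm{loc}}(M\times[0,\infty))$; the limit is a Ricci flow on $M$ with initial metric $g$ that is flat for every $t>0$, and by smooth continuity at $t=0$ the metric $g$ itself is flat. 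No uniqueness statement and no $A$-uniform distance distortion are needed.
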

%
%
%
%
%
Ricci curvature lower bound and the validity of isoperimetric inequalities does not imply injectivity lower bound, however we can show that it rules out nontrivial topology locally under a certain scale.
\begin{cor}\label{cor: local topology control}
For any $n$ and $k$, {\blue there} exist constants $\delta$ and $\eta$ depending only on $n$ and $k$, such that if the geodesic ball $B(p,1)$ is relatively compact in a Riemmannian manifold $(M,g)$, $Ric \geq -k g$ on $B(p,1)$, and the $\delta-$almost Euclidean isoperimetric inequality holds in $B(p,1)$, then $B(p,\eta)$ is diffeomorphic to a Euclidean ball of dimension $n$.
\end{cor}

{
\blue 

Yau's uniformization conjecture predicts that complete K{\"a}hler manifolds with positive holomorphic bisectional curvature are biholomorphic to $\mathbb{C}^n$. This conjecture has generated a lot of research, especially in the study of complete noncompact Ricci flow, and there are many partial confirmations, one can refer to \cite{chau2007survey} for a survey, and to \cite{liu2016gromov} for the most recent progress. By \cite{chau2006complex}, Yau's conjecture is true under the additional assumptions of maximal volume growth and bounded curvature. The bounded curvature condition can be relaxed in certain situations as shown in \cite{huang2015k} using Ricci flow. 
Recently G. Liu proved in \cite{liu2016gromov} that this conjecture is true if only assume maximal volume growth in addition, his approach is different from the above mentioned work and does not rely on the Ricci flow. 
Shortly after, Lee and Tam \cite{lee2017chern} gave another proof of Liu\rq{}s result using the Ricci flow, in particular, a critical step in their proof is to establish the short-time existence of K\"ahler Ricci flow on noncollapsed K\"ahler manifolds with nonnegative bisectional curvature, but with potentially unbounded curvature.

Since nonnegative holomorphic bisectional curvature implies nonnegative Ricci curvature, Theorem \ref{thm: short-time existence} can be applied to reprove a special case of Liu\rq{}s result, see Corollary \ref{cor: nonnegative BK}. 




}

\textbf{Acknowledgement:} The author is grateful to the University of Minnesota, Twin Cities, where the first draft of this work was finished. He would like to thank Professor Jiaping Wang for helpful conversations.
{\blue And he is indebted to anonymous referees for providing very helpful comments.}

\section{Proof of existence}

Perelman's pseudolocality is applied to give the following estimate of the lifespan of Ricci flow solutions.
\begin{lem}\label{lem:lifespan via pseudolocality}
Let $\delta_0$ and $\epsilon_0$ be the constants in Theorem \ref{thm: pseudolocality} (where $A>0$ can be arbitrarily chosen). Let $(M^n,g(t)), t\in[0,T)$ be a complete Ricci flow solution, { \blue where $T>0$, such that $g(t)$ has bounded curvature for each $t\in[0,T)$, and 
\[\limsup _{t\to T^-}\sup_{M}|Rm|_{g(t)} = \infty.\]
 }
 Suppose there is $r>0$, such that  $S(g(0)) \geq -\frac{1}{r^2}$ on $M$ and the $\delta_0-$almost Euclidean isoperimetric inequality holds in $B_{g(0)}(x,r)$ for all $x\in M$, then $T\geq \epsilon_0^2 r^2$.
\end{lem}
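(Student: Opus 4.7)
The plan is to apply Perelman's pseudolocality (Theorem \ref{thm: pseudolocality}) pointwise and contradict maximality. Fix $A>0$ and let $\delta_0,\epsilon_0$ be the associated constants. Assume for contradiction that $T<\epsilon_0^2 r^2$.

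First, I would arrange to be in the hypothesis of Theorem \ref{thm: pseudolocality}. Pick any $T'<T$ and set $\epsilon=\sqrt{T'}/r$; since $T'<T<\epsilon_0^2 r^2$ we have $\epsilon<\epsilon_0$, and on $[0,T']=[0,(\epsilon r)^2]$ the flow is complete with bounded curvature. The assumptions of Theorem \ref{thm: pseudolocality} on the initial data, namely the scalar curvature lower bound $S(g(0))\geq -r^{-2}$ and the $\delta_0$-almost Euclidean isoperimetric inequality in $B_{g(0)}(x_0,r)$, hold at \emph{every} $x_0\in M$. So for each fixed $x_0\in M$ the pseudolocality conclusion
\[
|Rm|(x,t)\ \leq\ \frac{A}{t}+\frac{1}{(\epsilon_0 r)^2}
\]
holds for $x\in B_{g(t)}(x_0,\epsilon_0 r)$ and $t\in(0,T']$.

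Second, I would observe that by varying $x_0$ this bound becomes global: for any point $x\in M$ the choice $x_0=x$ puts $x$ inside $B_{g(t)}(x,\epsilon_0 r)$, so the estimate above holds at every $(x,t)\in M\times(0,T']$. The right-hand side depends neither on $x$ nor on $T'$, and $T'<T$ was arbitrary, so
\[
|Rm|(x,t)\ \leq\ \frac{A}{t}+\frac{1}{(\epsilon_0 r)^2}\qquad\text{on }M\times(0,T).
\]

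Third, I would use this together with the bounded curvature assumption on a small initial interval $[0,\tau]$ to conclude that $|Rm|$ is uniformly bounded on $M\times[0,T)$. The standard extension criterion for complete Ricci flows with bounded curvature (via Shi's estimates and Hamilton's long time existence criterion) then yields a complete smooth extension of $g(t)$ past $T$, contradicting the maximality of $T$. Hence $T\geq\epsilon_0^2 r^2$.

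The only mildly delicate point is the step where one invokes pseudolocality simultaneously at \emph{all} basepoints $x_0$ with a uniform constant; this is fine because $\delta_0$ and $\epsilon_0$ depend only on $n$ and $A$, not on $x_0$, so the pointwise bounds patch together into a global one. The rest is a routine maximality/extension argument.
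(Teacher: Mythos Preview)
Your proposal is correct and follows essentially the same argument as the paper: apply pseudolocality at every basepoint to obtain a global curvature bound of the form $|Rm|(x,t)\leq A/t + (\epsilon_0 r)^{-2}$ on $M\times(0,T)$, which contradicts the blow-up criterion at a maximal time $T<(\epsilon_0 r)^2$. You are in fact more careful than the paper in explicitly choosing $\epsilon=\sqrt{T'}/r$ on a subinterval and then letting $T'\nearrow T$, and in spelling out how the small-time bounded curvature combines with the $A/t$ bound to invoke the extension criterion; the paper simply asserts the contradiction directly.
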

\begin{proof}
If $T< (\epsilon_0 r)^2$, by Theorem \ref{thm: pseudolocality}, we have
\[|Rm|(x,t)\leq \frac{A}{t} + \frac{1}{\epsilon_0^2 r^2}\]
for all $t\in (0,T)$ and all $x\in M$, which leads to a contradiction.
\end{proof}
Another ingredient in the proof of Theorem \ref{thm: short-time existence} is the following construction of a distance-like function with controlled Laplacian. Here we state a scaling invariant version.
\begin{lem}[Schoen-Yau \cite{MR1333601}]\label{lemma: distance-like function}
Let $(M,g)$ be a complete Riemannian manifold with dimension $n$ and $Ric \geq -l$ for some $l>0$. Then there is a distance-like function $\gamma:M \to \infty$, such that
\begin{equation}\label{eqn: distance-like function}
\lambda d(x,p) \leq \gamma(x) \leq \Lambda d(x,p),
\end{equation}
\[
|\nabla \gamma|\leq C_0 \quad and \quad |\Delta \gamma|\leq C_0\sqrt{l},
\]
when $d(x,p) \geq C_0/\sqrt{l}$,
where constants $\lambda, \Lambda$, $C_0$ depends only on $n$ and a lower bound of {\blue  $Vol_gB_g(p,1/\sqrt{l}) l^{n/2}$}.
\end{lem}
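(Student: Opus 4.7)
The plan is to first reduce to the case $\mathrm{Ric} \geq -1$ by rescaling, then build $\gamma$ as a partition-of-unity weighted average of the reference distances $d(p,p_i)$ over a controlled unit-ball cover of $M$. Setting $\tilde g = l g$ yields $\mathrm{Ric}_{\tilde g} \geq -1$, $d_{\tilde g} = \sqrt{l}\, d_g$, $|\nabla \cdot|_{\tilde g} = l^{-1/2}|\nabla \cdot|_g$, and $\Delta_{\tilde g} = l^{-1}\Delta_g$, while the volume hypothesis becomes $\mathrm{Vol}_{\tilde g}\,B_{\tilde g}(p,1) \geq \nu$ for some $\nu>0$. It thus suffices to produce $\tilde\gamma$ on $(M,\tilde g)$ with $|\nabla\tilde\gamma|_{\tilde g}, |\Delta_{\tilde g}\tilde\gamma| \leq C_0$ and $\tilde\lambda\, d_{\tilde g} \leq \tilde\gamma \leq \tilde\Lambda\, d_{\tilde g}$ for $d_{\tilde g} \geq C_0$; then $\gamma := \tilde\gamma/\sqrt{l}$ is easily checked to satisfy the stated bounds for $(M,g)$.

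Next we select a maximal $\tfrac{1}{2}$-separated net $\{p_i\}$ in $(M, \tilde g)$, so that $\{B(p_i,1)\}$ covers $M$. A Bishop--Gromov packing argument, comparing the disjoint balls $B(p_i, 1/4)$ with a common enclosing ball $B(x, 4.25)$, shows that any $B(x,3)$ meets at most $N(n)$ members of the cover, a bound depending only on $n$. For each $i$ we construct a smooth cutoff $\phi_i$ supported in $B(p_i, 2)$, identically $1$ on $B(p_i, 1)$, satisfying $|\nabla\phi_i| + |\Delta\phi_i| \leq C(n,\nu)$, and form the partition of unity $\psi_i := \phi_i / \sum_j \phi_j$; the denominator lies in $[1, N(n)]$, so $\psi_i$ inherits the same derivative estimates.

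Set $r_i := d(p, p_i)$ and define $\gamma(x) := \sum_i r_i\, \psi_i(x)$. The two-sided comparison $|\gamma - d(\cdot,p)| \leq 2$ is immediate from $|r_i - d(x,p)| \leq 2$ on $\mathrm{supp}\,\psi_i$ together with $\sum_i \psi_i \equiv 1$, which gives the required $\lambda d \leq \gamma \leq \Lambda d$ as soon as $d(\cdot,p) \geq C_0$. For the derivative bounds we use $\sum_i \nabla\psi_i = 0$ and $\sum_i \Delta\psi_i = 0$ to rewrite, at any $x$ and for any fixed $i_0$ with $\psi_{i_0}(x)>0$,
\[
\nabla\gamma(x) = \sum_i (r_i - r_{i_0})\,\nabla\psi_i(x), \qquad \Delta\gamma(x) = \sum_i (r_i - r_{i_0})\,\Delta\psi_i(x);
\]
the finite multiplicity from the covering step and the triangle-inequality bound $|r_i - r_{i_0}| \leq 4$ then yield the claimed uniform bounds.

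The main obstacle is the cutoff construction. Because only a lower Ricci bound is assumed (no sectional bound, no curvature derivative control), the naive radial cutoff $\chi(d(\cdot,p_i))$ supplies only a one-sided distributional Laplacian bound via Laplacian comparison $\Delta d(\cdot,p_i) \leq (n-1)\coth d(\cdot,p_i)$, and the cut locus of $p_i$ obstructs direct smoothness. One upgrades to a smooth, two-sided $L^\infty$ Laplacian bound by a regularization procedure, e.g.\ a short-time heat-semigroup smoothing or averaging over a harmonic-radius-scale neighborhood, and it is precisely this regularization step that forces the constants to depend on the local volume lower bound $\nu$, through the Sobolev and elliptic constants controlling the smoothing estimates.
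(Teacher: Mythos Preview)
The paper gives no proof of this lemma; it is simply quoted from Schoen--Yau \cite{MR1333601}. Your outline is the standard construction found there: the rescaling to $l=1$, the bounded-multiplicity cover by a maximal net, the definition $\gamma=\sum_i r_i\psi_i$, and the subtraction trick $\nabla\gamma=\sum_i(r_i-r_{i_0})\nabla\psi_i$ are all as in the original argument.

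You correctly isolate the cutoff with a two-sided $L^\infty$ Laplacian bound as the only nontrivial ingredient and do not carry it out. Two remarks on the options you list: the heat-semigroup route needs care, since one must control $\Delta e^{t\Delta}\chi$ when $\Delta\chi$ is merely a signed measure supported on the cut locus; and the ``harmonic-radius-scale'' route is not available here, as a harmonic-radius lower bound requires an injectivity-radius hypothesis you do not have. The robust argument is the Cheeger--Colding cutoff (or, equivalently, the Poisson-equation construction already in Schoen--Yau, using Laplacian comparison and the Cheng--Yau gradient estimate), which in fact produces $\phi_i$ with $|\nabla\phi_i|+|\Delta\phi_i|\le C(n)$ depending on the dimension alone. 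Thus the volume dependence you attribute to this step is not intrinsic; it is an artifact of the particular estimates in the cited reference rather than a necessary feature of the result.
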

\begin{rem}
It is evident from the proof in \cite{MR1333601} that Lemma \ref{lemma: distance-like function} works on $B_g(p,\rho)$ for $\rho$ large enough, as long as $B_g(p,\rho+1/\sqrt{l})$ is relatively compact in $(M,g)$, even when $(M,g)$ is incomplete.
\end{rem}
In the following, we denote the level sets of $\gamma$ by
\begin{equation}\label{definition of U_rho}
U_{\rho}=\gamma^{-1}([0,\rho))
\end{equation}
for any $\rho>0$.

In the next lemma we will construct a good conformal metric on any given level set of $\gamma$. The function $f(s)$ in the proof below has been used in the work of Hochard \cite{hochard2016short}, also implicitly in Topping's \cite{topping2010ricci}, to conformally transform a compact domain into a complete manifold with bounded curvature. Note that $f(s)$ is essentially the conformal factor for a scaled hyperbolic metric on an Euclidean ball, and recall that the Euclidean isoperimetric inequality holds on hyperbolic spaces. Therefore we would like to construct a conformal factor by combining $f(s)$ with $\gamma$, then we can verify that the scalar curvature lower bound and the almost Euclidean isoperimetric inequality are roughly preserved.

\begin{lem}\label{lem: existence of a good conformal metric}
{\blue
For any $n$ and $\delta_1$, there are constants $C(n,\delta_1)$ and $c(n,\delta_1)$ with the following properties. Let $(M,g)$ be a Riemannian manifold (not necessarily complete) such that a function $\gamma$ satisfying condition (\ref{eqn: distance-like function}) is defined on it. Let $U_\rho$ be the level sets of $\gamma$ as defined in (\ref{definition of U_rho}).
Suppose there are $k \geq 0$ and $r>0$, such that on $U_{\rho}$ we have $S\geq -k$ and the  $\delta_1-$almost Euclidean isoperimetric inequality holds in any $B(x,r)\subset U_\rho$. Suppose also that $\rho>2c(n,\delta_1)C_0r\sqrt{l+1}$ and $U_\rho$ is relatively compact in $M$. Then there is a conformal metric $h$ on $U_\rho$ with the following properties:
}

(i) $(U_\rho, h)$ is a complete Riemannian  manifold with uniformly bounded curvature, and $h\equiv g$ on $U_{\rho - c(n,\delta_1)C_0 r\sqrt{l+1}}$.

(ii) the scalar curvature of $h$ is bounded from below by
\[S_h \geq -k - C(n,\delta_1)\max\{\frac{1}{r}, \frac{1}{r^2}\} ;\]

(iii) the $2\delta_1-$almost Euclidean isoperimetric inequality holds in any geodesic ball with radius $ r/8$ with respect to $h$.
\end{lem}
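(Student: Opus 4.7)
The plan is to set $h = e^{2\phi}g$ with $\phi = f(\gamma)$, where $f:[0,\rho)\to[0,\infty)$ is chosen to vanish on the inner region and to blow up logarithmically near $\rho$, modelled on the Poincar\'e-disk conformal factor. Precisely, set $\rho_0 := \rho - c(n,\delta_1)C_0 r\sqrt{l+1}$ and let $f\equiv 0$ on $[0,\rho_0]$; on $[\rho_0,\rho)$ let $f(s)$ be a smoothed version of $-\log(1-\tau)$ with $\tau := (s-\rho_0)/(\rho-\rho_0)$. Then $h\equiv g$ on $U_{\rho_0}$ by construction, completeness of $(U_\rho,h)$ follows because the $h$-length of any curve pushed outward by $\nabla\gamma$ diverges as $\gamma\to\rho$, and bounded sectional curvature of $h$ is a direct computation using the standard conformal curvature formulas and Lemma~\ref{lemma: distance-like function}, since on the collar $U_\rho\setminus U_{\rho_0}$ the geometry is a bounded perturbation of a rescaled piece of hyperbolic space of sectional curvature of order $-(\rho-\rho_0)^{-2}$.

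For (ii), I would apply the conformal transformation of scalar curvature
\[
S_h = e^{-2\phi}\bigl(S_g - 2(n-1)\Delta_g\phi - (n-1)(n-2)|\nabla_g\phi|^2\bigr),
\]
together with $\nabla\phi = f'(\gamma)\nabla\gamma$ and $\Delta\phi = f''(\gamma)|\nabla\gamma|^2 + f'(\gamma)\Delta\gamma$. The model function satisfies $|f'(s)|\leq C(n)(\rho-\rho_0)^{-1}(1-\tau)^{-1}$ and $|f''(s)|\leq C(n)(\rho-\rho_0)^{-2}(1-\tau)^{-2}$, while $e^{-2\phi}$ is of order $(1-\tau)^{2}$, so the worst blow-ups cancel. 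Combined with $|\nabla\gamma|\leq C_0$ and $|\Delta\gamma|\leq C_0\sqrt{l}$ from Lemma~\ref{lemma: distance-like function}, the conformal correction to $S_g$ is bounded by a constant multiple of $(\rho-\rho_0)^{-2} + \sqrt{l}\,(\rho-\rho_0)^{-1}$, and with the choice $\rho-\rho_0 = c(n,\delta_1)C_0 r\sqrt{l+1}$ this becomes at most $C(n,\delta_1)\max\{1/r, 1/r^2\}$, yielding the claimed lower bound on $S_h$.

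Property (iii) is where the main difficulty lies. Since $\phi\geq 0$, one has $h\geq g$ and hence $B_h(x,r/8)\subseteq B_g(x,r/8)$ for every $x$. The decisive quantitative input is that the oscillation of $\phi$ over any $h$-ball of radius $r/8$ is very small: $|\nabla\phi|_h = e^{-\phi}|f'(\gamma)||\nabla\gamma|_g$, and for the model $f$ the product $e^{-\phi}|f'|$ is uniformly bounded by $C_0/(\rho-\rho_0) = 1/(cr\sqrt{l+1})$, so $\mathrm{osc}_{B_h(x,r/8)}\phi \leq 1/(8c)$. Taking $c=c(n,\delta_1)$ large enough makes this oscillation smaller than $C(n)\delta_1$, and then the conformal transformation of the isoperimetric ratio gives
\[
\frac{(Area_h(\partial\Omega))^n}{(Vol_h(\Omega))^{n-1}} \geq e^{-n(n-1)\,\mathrm{osc}\,\phi}\cdot \frac{(Area_g(\partial\Omega))^n}{(Vol_g(\Omega))^{n-1}},
\]
which degrades the underlying $g$-side isoperimetric constant by at most a further factor $1-C(n)\delta_1$.

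It remains to show the right-hand $g$-ratio is itself at least $(1-\delta_1)I_n$ for every $\Omega\subset B_h(x,r/8)$, which is the real subtlety and the principal obstacle. When $x$ lies at $g$-distance at least $r$ from $\partial U_\rho$, one applies the hypothesis directly to $B_g(x,r)\subset U_\rho$, which contains $B_g(x,r/8)\supseteq B_h(x,r/8)\supseteq\Omega$. In the harder regime, where $x$ is within $g$-distance $r$ of $\partial U_\rho$, the $h$-ball lies deep in the hyperbolic-like collar, where $h$ is bilipschitz close to a small ball in standard hyperbolic space (with bilipschitz constant tending to $1$ as $c\to\infty$); on that model the Euclidean isoperimetric inequality holds exactly, and one transfers it back to $h$ with the residual loss absorbed into the $2\delta_1$ budget. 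Balancing the oscillation error, the bilipschitz approximation error, and the ambient $\delta_1$-defect against the $2\delta_1$ target is the crux of the argument, and it is what ultimately forces the stated dependence $c = c(n,\delta_1)$ and the radius $r/8$ rather than $r$.
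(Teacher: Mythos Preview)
Your overall strategy matches the paper's: take $h=e^{2\phi}g$ with $\phi=f(\gamma)$ modelled on the Poincar\'e conformal factor, get the scalar bound from the conformal formula plus Lemma~\ref{lemma: distance-like function}, and transfer the isoperimetric inequality via a small-oscillation estimate for $\phi$. Your observation $|\nabla\phi|_h = e^{-\phi}|f'(\gamma)|\,|\nabla\gamma|_g \leq C_0/(\rho-\rho_0)$ is in fact a cleaner packaging of the key oscillation bound than the paper's explicit inversion of the auxiliary function $\phi(\alpha)=\alpha e^{f(d-\alpha)}$.

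The genuine gap is in your ``hard regime'' for (iii). You assert that when $x$ lies within $g$-distance $r$ of $\partial U_\rho$, the metric $h$ is bilipschitz close to a small hyperbolic ball with constant tending to $1$ as $c\to\infty$. This is neither justified nor true in general: the conformal factor is hyperbolic-like, but the background $g$ is arbitrary (only $Ric\geq -l$), so $h=e^{2\phi}g$ need not be close to any constant-curvature space, and no bound on $|Rm_g|$ is assumed. The paper never makes such a comparison.

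More to the point, the case split is unnecessary, because your own oscillation bound already handles all points uniformly. Since $\phi\geq \phi(x)-\epsilon$ on $B_h(x,r/8)$, one has $h\geq e^{2(\phi(x)-\epsilon)}g$ there and hence
\[
B_h(x,r/8)\subset B_g\bigl(x,\; e^{\epsilon-\phi(x)}\,r/8\bigr).
\]
For your model $e^{-\phi(x)}=(\rho-\gamma(x))/(\rho-\rho_0)$, so together with $|\nabla\gamma|\leq C_0$ this $g$-ball sits in the annulus $\{|\gamma-\gamma(x)| < e^{\epsilon}(\rho-\gamma(x))/(8c\sqrt{l+1})\}$, which for $c$ mildly large stays strictly inside $U_\rho$ and has $g$-radius at most $r$. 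Thus the $g$-isoperimetric hypothesis applies directly, even for $x$ arbitrarily close to the boundary---there is no separate ``hard'' regime. The paper carries out exactly this containment argument via an explicit computation of $\alpha_{\tilde\tau}=\phi^{-1}(\tilde\tau)$ (splitting on whether the ball reaches the region $\{f\equiv 0\}$), and the final choice $\kappa=C_0r\sqrt{l+1}/(8a\rho)$ is what produces the radius $r/8$ in the conclusion. Your fix is simply to drop the hyperbolic approximation and use the shrinking $g$-ball instead.
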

\begin{proof}
For $0<\kappa<1$ to be determined later, define $f:[0,1) \to [0, \infty)$ by
\[
f(s)=\begin{cases}
0, & 0 \leq s\leq 1-\kappa; \\
-\ln \left(1-(\frac{s-1+\kappa}{\kappa})^2\right), & 1-\kappa < s < 1.
\end{cases}
\]
By simple computation, for any $1-\kappa < s < 1$ we have
\[0 < \frac{df}{ds} = \frac{2(s-1+\kappa)}{\kappa^2 -(s-1+\kappa)^2} \leq \frac{2\kappa}{\kappa^2 - (s-1+\kappa)^2},\]
\[ 0 < \frac{d^2 f}{ds^2} = \frac{2(\kappa^2+(s-1+\kappa)^2)}{(\kappa^2-(s-1+\kappa)^2)^2} \leq \frac{4\kappa^2}{(\kappa^2 -(s-1+\kappa)^2)^2}.\]

Let $f(x)=f(\rho^{-1}\gamma(x))$ for $x\in U_{\rho}$.
Define the conformal metric $h=e^{2f}g$. Then $h=g$ on $U_{(1-\kappa)\rho}$. The completeness of $h$ is easy to check.

The scalar curvature of $h$ is given by the well-known formula
\[ S_h = e^{-2f} \left( S_g - \frac{4(n-1)}{n-2} e^{-(n-2)f/2} \Delta_g e^{(n-2)f/2}\right) \]
for $n \geq 3$. By simple calculation we have

\[
S_h = e^{-2f} \left[ S_g -\frac{4(n-1)}{n-2}\left( (\frac{n-2}{2\rho})^2 |f'|^2|\nabla \gamma|^2 + \frac{n-2}{2\rho^2} f'' |\nabla \gamma|^2 + \frac{n-2}{2\rho} f' \Delta \gamma\right) \right].
\]
Then by Lemma \ref{lemma: distance-like function} and the fact that $f'e^{-2f} \leq 2/\kappa$, $|f'|^2e^{-2f} \leq 4/\kappa^2$ and $f''e^{-2f}\leq 4/\kappa^2$, we have
\begin{equation}\label{eqn: scalar curvature lower bound for h}
S_h \geq e^{-2f} \geq -k - \frac{4(n-1)^2C_0^2}{\rho^2 \kappa^2} - \frac{4(n-1)C_0\sqrt{l}}{\rho \kappa}.
\end{equation}
For $n=2$ the calculation is similar.

Next we need to verify that $h$ has uniformly bounded sectional curvature. By the compactness of $\bar{U_\rho}$ we can find a number $K>0$ such that $|Rm_g|\leq K$ and $|\nabla^2 \rho|\leq K $ on $\bar{U_\rho}$. Hence
\[ |\nabla f(\rho^{-1}\gamma(x))|^2 e^{-2f}\leq \frac{4C_0^2}{\kappa^2 \rho^2},\]
\[ |\nabla ^2 f(\rho^{-1} \gamma(x))| e^{-2f} \leq (\frac{C_0^2}{\kappa^2 \rho}+\frac{2K}{\kappa^3})\frac{1}{\rho}. \]
Recall the formula for sectional curvature under the conformal change is
\[K^h_{ij}=e^{-2f}(K^g_{ij} - \sum_{k\neq i, j} |\nabla_k f|^2 + \nabla_i\nabla_i f + \nabla_j\nabla_j f),\]
when calculated in an orthonormal frame.
Therefore the metric $h$ have sectional curvature uniformly bounded by a constant $C(n,K,\kappa, \rho)$.

Finally we need to estimate the isoperimetric constant for the conformal metric $h$. In this step we will determine the value of $\kappa$.

Let's fix a point $y\in U_{\rho}$, suppose $\gamma(y)=d\rho$ for some $d \in [0,1)$. Let $0<\alpha<\min\{d, 1-d\}$ be a constant whose value will be determined later. Since the function $f$ is monotonically nondecreasing, on the set {\blue $U_{(d+\alpha)\rho}\setminus U_{(d-\alpha)\rho}$ }we have
\[e^{2f(d-\alpha)}g \leq h \leq e^{2f(d+\alpha)}g.\]

 For any $0< \tau < \rho/(C_0\sqrt{l+1})$, we can always choose $\alpha$ large enough such that $B_h(y, \tau) \subset B_g(y, \alpha \rho /(C_0\sqrt{l+1}))$, which is a subset of {\blue $U_{(d+\alpha)\rho}\setminus U_{(d-\alpha)\rho}$} due to the gradient bound $|\nabla \gamma| \leq C_0$. To apply the isoperimetric inequality for $g$, we would like to choose $\alpha$ small enough such that $B_g(y, \alpha \rho /(C_0\sqrt{l+1})) \subset B_g(y,r)$. {\blue Therefore we would like to show the existence of a possible $\alpha$, and in the process we will fix a value of $\tau$.}

For any $x\in B_h(y,\tau)$ we have $d_g(x,y) \leq \tau e^{-f(d-\alpha)}$. So it is sufficient to choose $\alpha$ such that
\[\tau = \frac{\alpha e^{f(d-\alpha)} \rho }{C_0\sqrt{l+1}}\]
{\blue to guarantee the inclusion $B_h(y, \tau) \subset B_g(y, \alpha \rho /(C_0\sqrt{l+1}))$.}
More precisely, let's define
\[\phi(\alpha) := \alpha e^{f(d-\alpha)}=
\begin{cases}
\frac{\alpha}{1-\left(\frac{d-1+\kappa-\alpha}{\kappa}\right)^2}, & \alpha < d- 1+\kappa; \\
\alpha, & \alpha \geq d-1+\kappa.
\end{cases}\]
It turns out that $\phi(\alpha)$ is a nondecreasing function with range $[0,1)$. For simplicity let's denote $\tilde{\tau} = C_0 \sqrt{l+1} \tau/ \rho$. by direct computation
\[\alpha_{\tilde{\tau}} := \phi^{-1}(\tilde{\tau})=
\begin{cases}
\frac{-(\kappa^2 -2\tilde{\tau} \beta) + \sqrt{(\kappa^2 -2 \tilde{\tau} \beta)^2 + 4\tilde{\tau}^2 (\kappa^2 -\beta^2 )}}{2\tilde{\tau}}, & \tilde{\tau} < \beta; \\
\tilde{\tau}, & \tilde{\tau} \geq \beta;
\end{cases}
\]
where
\[\beta=d-1+\kappa.\]
To simplify, let $\tilde{\tau} = a \kappa$, the value of $a$ will be determined later. Then $\alpha_{\tilde{\tau}}$ can be written as
\[\alpha_{\tilde{\tau}} =
\begin{cases}
\frac{2a(\kappa^2 -\beta^2)}{(\kappa-2a\beta) + \sqrt{(\kappa -2a\beta)^2+ 4a^2(\kappa^2-\beta^2)}}, & \alpha_{\tilde{\tau}} < \beta \\
a \kappa, & \alpha_{\tilde{\tau}} \geq \beta.
\end{cases}
\]

Let's take
\[0< a < \frac{1}{4}.\]
When $\alpha_{\tilde{\tau}} <\beta$, we have
\[\alpha_{\tilde{\tau}} < \frac{2a(\kappa^2 - \beta^2)}{\kappa}<4a(\kappa-\beta). \]
If we further take $a< \frac{1}{8}$, then
\[\alpha_{\tilde{\tau}} + \beta < \frac{\kappa + \beta}{2}.\]

Claim: For any $\epsilon>0$, there is an $a$ depending only on $\epsilon$ (and independent of $d$), such that
\[|f(d+\alpha_{\tilde{\tau}}) - f(d-\alpha_{\tilde{\tau}})| <\epsilon.\]
When $\alpha_{\tilde{\tau}} < \beta$, by the definition of $f$ we have
\[e^{f(d+\alpha_{\tilde{\tau}})-f(d-\alpha_{\tilde{\tau}})} = \frac{\kappa^2 -(\beta-\alpha_{\tilde{\tau}})^2}{\kappa^2 - (\beta+\alpha_{\tilde{\tau}})^2} = 1+ \frac{4\alpha_{\tilde{\tau}} \beta}{\kappa^2 -(\beta + \alpha_{\tilde{\tau}})^2};\]
\[e^{f(d-\alpha_{\tilde{\tau}})-f(d+\alpha_{\tilde{\tau}})} = \frac{\kappa^2 -(\beta+\alpha_{\tilde{\tau}})^2}{\kappa^2 - (\beta-\alpha_{\tilde{\tau}})^2} = 1- \frac{4\alpha_{\tilde{\tau}} \beta}{\kappa^2 -(\beta - \alpha_{\tilde{\tau}})^2};\]
Using $\alpha_{\tilde{\tau}} < 4a(\kappa -\beta)$ and $\beta+\alpha_{\tilde{\tau}} < \frac{\kappa + \beta}{2}$, we can estimate
\[0<\frac{4\alpha_{\tilde{\tau}} \beta}{\kappa^2 -(\beta + \alpha_{\tilde{\tau}})^2} < 32 a;\]
\[0<\frac{4\alpha_{\tilde{\tau}} \beta}{\kappa^2 -(\beta - \alpha_{\tilde{\tau}})^2} < 16a.\]
When $\alpha_{\tilde{\tau}} \geq \beta$ (equivalently $d-\alpha_{\tilde{\tau}} \leq 1-\kappa$), we have
\[
1 \leq e^{f(d+\alpha_{\tilde{\tau}})-f(d-\alpha_{\tilde{\tau}})} \leq \frac{\kappa^2}{\kappa^2 - (\beta+\alpha_{\tilde{\tau}})^2}< \frac{1}{(1-4a^2)}.
\]
\[
1\geq e^{f(d-\alpha_{\tilde{\tau}})-f(d+\alpha_{\tilde{\tau}})} \geq \frac{\kappa^2 - (\beta+\alpha_{\tilde{\tau}})^2}{\kappa^2}> 1-\frac{4\alpha_{\tilde{\tau}}^2}{\kappa^2}=1-4a^2.
\]
Then it is clear that we can take $a$ small enough and only depending on $\epsilon$, thus the claim is proved.

Since $\alpha_{\tilde{\tau}} < 4a \kappa$ by the above analysis, we can take
\[\kappa = \frac{C_0r\sqrt{l+1}}{8 a\rho}\]
when $\rho$ is large enough (so that $\kappa < 1$), then $B_h(y, \tau) \subset B_g(y, r)$, {\blue this choice of $\kappa$ is equivalent to taking $\tau = \frac{r}{8}$}. Let $\Omega$ be a connected domain with smooth boundary in $B_h(y,\tau)$,
\[Area_h(\partial \Omega)\geq e^{(n-1)f(d-\alpha_{\tilde{\tau}})} Area_g (\partial \Omega),\]
\[Vol_h (\Omega) \leq e^{nf(d+\alpha_{\tilde{\tau}})}Vol_g (\Omega).\]
By the isoperimetric assumption in the lemma,
\[(Vol_h(\Omega))^{n-1}\leq e^{n(n-1)(f(d+\alpha_{\tilde{\tau}})-f(d-\alpha_{\tilde{\tau}}))} (1-\delta_1)I_n (Area_h(\partial \Omega))^{n}.\]
Therefore we can find a number $a$ depending on $n, \delta_1$, and determines the above constant $\kappa$, such that the almost Euclidean isoperimetric inequality
\begin{equation}\label{eqn: isoperimetric inequality for h}
(Vol_h(\Omega))^{n-1}\leq (1-2\delta_1)I_n (Area_h(\partial \Omega))^{n}
\end{equation}
holds for any regular domain $\Omega$ in $B_h(y,\tau)$. 

Now we have verified all desired properties. Let's point out that $f(s)$ is not smooth at $s=1-\kappa$. However, from the proof we can see that a smooth approximation of $f$ will work as long as it is sufficiently close to $f$ in $C^0$ norm. This can be done by standard mollifying method.
\end{proof}

\begin{proof}[Proof of Theorem \ref{thm: short-time existence}]

By the assumptions, when $R$ is sufficiently large we have {\blue $Ric \geq -(L+1) R^2$ on $B_g(p,R)$ }. The volume ratio $Vol_gB_g(p,s)/s^n$ is controlled from below by the isoperimetric inequality when $s<r$. By Lemma \ref{lemma: distance-like function} (and the remark following it) we can construct a good distance-like function $\gamma$ on $B_g(p,R)$, with constants $\lambda, \Lambda$ and $C_0$ depending only on $n$ (by requiring WLOG $\delta_1 < 1/2$). Since $\lambda d_g(p,x)< \gamma(x) < \Lambda d_g(p,x)$, we can choose $ \rho = \lambda R$, such that the level set $U_\rho \subset B_g(p,R)$.

To use Lemma \ref{lem: existence of a good conformal metric}, we have to check that {\blue $\rho > 2c(n,\delta_1)C_0 r \sqrt{L+1}R$}. This can be done by choosing {\blue $r\leq c_1/\sqrt{L+1}$} for some constant $c_1$ small enough depending on $n$ and $\delta_1$.

Therefore we have a complete metric $h$ on $U_\rho$ with bounded curvature, by Shi \cite{shi1989deforming}, a complete solution {\blue $g_\rho(t)$} with bounded curvature exists. Then we can use Lemma \ref{lem:lifespan via pseudolocality} to get a lower bound of the lifespan of Shi's solution, which is independent of $R$. 
{\blue Moreover, these solutions satisfy a curvature bound in the form
\[\sup_{U_\rho}|Rm|_{g_\rho(t)}\leq \frac{A}{t} + \frac{1}{\epsilon_0(n, A)^2 r^2}.\]
}
Therefore we can take $R \to \infty$, and the solutions converge subsequentially to a complete solution {\blue $g(t)$} on $M$. The initial metric of this limit solution is $g$ since $h=g$ on $B_g(p,2^{-1}\Lambda^{-1}\lambda R)$ for each $R$ large enough. The choice of {\blue $\bar{r}=\min\{r, \sqrt{1/(L+1)k}\}$} can be justified by scaling arguments.

Since the curvature is not bounded, this convergence does not follow directly from the compactness of Ricci flows. However, we can apply the interior curvature estimate of B.L.Chen (\cite{chen2007strong} Theorem 3.1), and the modified Shi's estimates (\cite{lu2005uniqueness} Theorem 11) to guarantee that the convergence is smooth and uniform on any compact set. 

{\blue To show the completeness, note that we can apply Lemma \ref{lem: distance lower bound} to each solution on $U_\rho$ for $0< t < \epsilon_0^2 r^2 A$, and the conclusion of Lemma \ref{lem: distance lower bound} passes to the limit solution to show
\[d_{g(t)}(x,y) \geq d_{g(0)}(x,y) - 4(n-1)(2A+1)\sqrt{t},\]
for any $x, y \in M$. Then the completeness of $g(t)$ follows since $g(0)$ is complete. The completeness of $g(t)$ for larger $t$ follows from well-known results since $g(t)$ has bounded curvature for $t>0$.
}

The non-collapsing claim comes from applying Lemma \ref{lem: Perelman's short-time noncollapsing} to every bounded curvature solution in the converging sequence.
\end{proof}
{
\blue
To prove Corollary \ref{cor: Ricci and injectivity radius bounded below}, we need the following lemma, which is a special case ($K=0$) of the more general Theorem 1 in \cite{cavalletti2017almost}.
\begin{lem}[Cavalletti - Mondino]\label{lem: almost Euclidean volume}
For any dimension $n\geq 2$, there are constants $\bar{\epsilon}(n)$, $\bar{\eta}(n)$, $\bar{\delta}(n)$ and $C(n)$ satisfying the following property. Let $(M,g)$ be a smooth $n$-dimensional Riemannian manifold and let $\bar{x}\in M$, suppose $B_g(\bar{x}, 1)$ is relatively compact in $M$ and for some $0< \epsilon \leq \bar{\epsilon}(n)$ and $0 < \eta \leq \bar{\eta}(n)$ it holds:

(1) $Vol_g B_g(\bar{x}, 1) \geq (1-\eta) \omega_n$,

(2) $Ric \geq - \epsilon g$ on $B_g(\bar{x}, 1)$.

\noindent
Then for any $0 < \delta < \bar{\delta}(n)$, the almost Euclidean isoperimetric inequality
\[
(Area_g(\partial \Omega))^n \geq (1-C(n)(\delta+\epsilon+\eta))I_n (Vol_g(\Omega))^{n-1}
\]
holds for any $\Omega \subset B_g(\bar{x}, \delta)$.
\end{lem}







\begin{proof}[Proof of Corollary \ref{cor: Ricci and injectivity radius bounded below}]
Conditions (i) and (ii) in Theorem \ref{thm: short-time existence} are clearly implied by the Ricci lower bound $Ric(g) \geq -lg$.
To check the condition (iii) in Theorem \ref{thm: short-time existence},
first rescale the metric $\tilde{g} = Q g$, then $Ric(\tilde {g}) \geq  - \frac{l}{Q} \tilde{g}$. Choose $Q$ such that 
\[
\frac{l}{Q} = \min\{ \frac{1}{3}  \frac{\delta_1}{C(n)}, \bar{\epsilon}(n)\},
\]
where (and in the rest of the proof) $\delta_1$ is taken to be the minimum of the constant from Theorem \ref{thm: short-time existence} and that from Proposition \ref{prop: short-time noncollapsing}, $C(n)$ and $\bar{\epsilon}(n)$ are the constants from Lemma \ref{lem: almost Euclidean volume}. Note that the constant $Q$ defined above depends on $n, A$ and $\epsilon$.
Similarly define $\eta = \min\{ \frac{1}{3}  \frac{\delta_1}{C(n)}, \bar{\eta}(n)\}$ and $r = \min\{ \frac{1}{3}  \frac{\delta_1}{C(n)}, \bar{\delta}(n)\}$, where $\bar{\eta}(n)$ and $\bar{\delta}(n)$ are also from Lemma \ref{lem: almost Euclidean volume}, note that both $\eta$ and $r$ depends on $n$, $A$ and $\epsilon$. 

By the scaling invariance of volume ratio we see that 
$Vol_{\tilde{g}} B_{\tilde{g}}(x, 1) \geq (1-\eta) \omega_n$, for all $x\in M$. Hence we can apply Lemma \ref{lem: almost Euclidean volume} to conclude that the $\delta_1$-almost isoperimetric inequality holds on $B_{\tilde{g}}(x, r)$ for all $x\in M$. Then by the scaling invariance of the isoperimetric inequality, we know that the $\delta_1$-almost isoperimetric inequality holds on $B_{g}(x, r/\sqrt{Q})$ for all $x\in M$ with respect to the unscaled metric $g$.

Then Theorem \ref{thm: short-time existence} implies that there exists a complete solution of the Ricci flow $g(t)$, $t \in[0, \tau/l]$, with $g(0)=g$, $\tau$ is a constant depending on $n, A, \epsilon$. Moreover the solution satisfies
\[
|Rm|(g(t)) \leq \frac{A}{t} + \frac{l}{T}.
\]
And once we have the Ricci flow solution provided by Theorem \ref{thm: short-time existence}, we see from the above discussion that the assumptions of Proposition \ref{prop: short-time noncollapsing} are satisfied, hence we have (ii).
\end{proof}

}





{\blue

\begin{proof}[Proof of Remark \ref{rem: injectivity radius lower bound}]
Suppose on a complete smooth Riemannian manifold $(M,g)$ we have $Ric \geq -l$ and the injectivity radius $inj_x > \iota > 0$ for any $x\in M$. By \cite{anderson1992c}, for any $x>0$ and $\epsilon>0$, $(M,g)$ has $W^{1,p}$-harmonic radius centered at $x$ bounded from below by a uniform constant $r_h(n,l,\iota, \epsilon)$, we refer to \cite{anderson1992c} for the definition of harmonic radius. In particular, there exists coordinate functions $(x_1, x_2, ..., x_n)$ on $B_g(x, r_h)$, such that in these coordinates, the metric $g$ satisfies
\[(1-\epsilon) \delta_{ij} \leq g_{ij} \leq (1+\epsilon)\delta_{ij}.\]
Hence for any $\Omega \subset B_g(x, r_h)$, we have 
\[(Area_g(\partial \Omega))^n \geq (\frac{1-\epsilon}{1+\epsilon})^{n(n-1)}I_n (Vol_g(\Omega))^{n-1}.\]
Clearly the $\delta_1-$almost Euclidean isoperimetric inequality can be verified in $B(x,r_h)$ when we choose $\epsilon$ small enough. Since $r_h$ is independent of $x$, the assumptions of Theorem \ref{thm: short-time existence} and Proposition \ref{prop: short-time noncollapsing} are satisfied.
\end{proof}

}

\section{Short-time analysis of the solutions}

Although the Ricci flow solutions provided by Theorem \ref{thm: short-time existence} are smoothly continuous to the initial data, the modulus of continuity a-priori depends on the initial sectional curvature bounds which are not assumed. In order to guarantee these solutions have some compactness property in the sense of \cite{hamilton1995compactness}, we would need uniform non-collapsing estimates depending only on the assumed conditions. In fact, a non-collapsing result is directly implied by the proof of the pseudolocality theorem as pointed out by Perelman in the compact case \cite{perelman2002entropy}, see also \cite{kleiner2008notes}. The complete noncompact case can be proved by the same argument with the help of heat kernel estimates established in \cite{chau2779131pseudolocality}, we sketch a proof following \cite{kleiner2008notes}.
\begin{lem}[Perelman]\label{lem: Perelman's short-time noncollapsing}
There is a constant $\kappa(n)$ depending only on the dimension $n$, such that under the same assumptions of Theorem \ref{thm: pseudolocality}, we have
\[Vol_{g(t)}B_{g(t)}(x,\sqrt{t})\geq \kappa(n)t^\frac{n}{2}\]
 for all $t\in(0, \epsilon_0^2 r^2]$ and $x\in B_{g(t)}(x_0, \epsilon_0 r)$, {\blue where $\epsilon_0$ is the same constant as in Theorem \ref{thm: pseudolocality}}.
\end{lem}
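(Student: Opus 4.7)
The plan is to follow Perelman's original entropy proof of short-time noncollapsing (see \cite{perelman2002entropy}, \cite{kleiner2008notes}), using the heat-kernel estimates of \cite{chau2779131pseudolocality} to deal with noncompactness. Fix $(x, t_0)$ with $t_0 \in (0, \epsilon_0^2 r^2]$ and $x\in B_{g(t_0)}(x_0, \epsilon_0 r)$; set $V=Vol_{g(t_0)}B_{g(t_0)}(x,\sqrt{t_0})$ and argue by contradiction, assuming $V<\kappa\, t_0^{n/2}$ for a small $\kappa=\kappa(n)$ to be determined.

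First, I would choose a standard cutoff $\phi$ supported in $B_{g(t_0)}(x,\sqrt{t_0})$ and define $f_0$ by $(4\pi t_0)^{-n/2}e^{-f_0}=c\,\phi^2$, normalized so that $(4\pi t_0)^{-n/2}\int e^{-f_0}\,dV_{g(t_0)}=1$. A direct computation then yields
\[ \W(g(t_0), f_0, t_0) \leq C(n) + \log\!\big(V/t_0^{n/2}\big), \]
which is very negative if $V$ is small. Solve the conjugate heat equation $\partial_t u = -\Delta u + S u$ backward from time $t_0$ with terminal data $u(\cdot,t_0)=(4\pi t_0)^{-n/2}e^{-f_0}$, and write $u(\cdot,t)=(4\pi\tau)^{-n/2}e^{-f(\cdot,t)}$ with $\tau = 2t_0-t$. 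Perelman's $\W$-monotonicity formula then gives
\[ \W(g(0), f(\cdot,0), 2t_0)\leq \W(g(t_0), f_0, t_0). \]

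The crux is a universal lower bound $\W(g(0), f(\cdot,0), 2t_0)\geq -C_1(n)$. By the pseudolocality conclusion and distance distortion estimates, $x$ sits well inside $B_{g(0)}(x_0,r)$; the Gaussian heat-kernel upper bounds from \cite{chau2779131pseudolocality} then imply that $u(\cdot,0)$ is concentrated at scale $\sqrt{t_0}\leq \epsilon_0 r$, so up to a negligible tail the entropy integral at time $0$ is effectively supported in $B_{g(0)}(x_0,r)$. On that ball, the $\delta_0$-almost Euclidean isoperimetric inequality is equivalent, via Federer-Fleming, to a Sobolev inequality $\delta_0$-close to the Euclidean sharp one, and hence to a near-Euclidean logarithmic Sobolev inequality. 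Combined with $S(\cdot,0)\geq -r^{-2}$ and $\tau=2t_0\leq 2\epsilon_0^2 r^2$, this yields the stated lower bound. Chaining the inequalities gives $-C_1(n)\leq C(n)+\log(V/t_0^{n/2})$, contradicting the assumption once $\kappa(n):=\tfrac12 e^{-C-C_1}$ is chosen.

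The main obstacle is the noncompactness: in the closed case the $\W$-monotonicity formula is a routine integration by parts and the entropy integral is manifestly finite. Here one must justify both the global existence of a conjugate heat kernel with enough decay for $\W$ to be well-defined and differentiable in $t$, and the truncation of $u(\cdot,0)$ outside $B_{g(0)}(x_0,r)$ needed for the log-Sobolev step. Both reduce to the Gaussian bounds of \cite{chau2779131pseudolocality}; once these are in place, the argument is a transcription of Perelman's compact proof as organized in \cite{kleiner2008notes}.
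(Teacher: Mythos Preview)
Your route and the paper's differ in a substantive way, even though both are entropy arguments.

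The paper does not run the standard noncollapsing argument back to $t=0$. It takes the conjugate heat \emph{kernel} $H$ based at $(x,2\epsilon^2)$ and pairs it with Perelman's time-dependent localizing cutoff $h$ from the pseudolocality proof (the one with $(\partial_t-\Delta)h\le \tfrac{10}{b(n)\epsilon^2}h$). From $\int hH\,d\mu(\epsilon^2)\ge c(n)$ together with a Li--Yau--type Harnack for $H$ (this is where \cite{chau2779131pseudolocality} enters, applied on $[\epsilon^2/2,\epsilon^2]$ using only the pseudolocality bound $|Rm|\le 2/\epsilon^2$), a small volume ratio forces $f_m=\inf f$ to be very negative and hence the \emph{localized} entropy at the intermediate time $\epsilon^2/2$ to be strictly negative. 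The contradiction is then literally the body of the pseudolocality proof, which says precisely that this localized quantity cannot be negative under the time-$0$ hypotheses. No log-Sobolev at $t=0$ is invoked, and every integral is cut off by $h$, so only the curvature control produced by pseudolocality on $B_{g(t)}(x_0,\epsilon_0 r)$ is ever used.

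Your argument instead uses global $\W$-monotonicity and a log-Sobolev inequality at $t=0$. The delicate point is not noncompactness per se but \emph{uniformity}: the lower bound $\W(g(0),f(\cdot,0),2t_0)\ge -C_1(n)$ must be independent of the ambient curvature bound $K$ of the flow, yet the Gaussian upper bounds you invoke for the conjugate heat solution typically carry constants depending on $K$. In particular, the ``negligible tail'' of $u(\cdot,0)$ outside $B_{g(0)}(x_0,r)$, and the contribution $\int_{M\setminus B}\tau S\,u$ where $S$ is only controlled by $K$, are not obviously small uniformly in $K$. If you can extract from \cite{chau2779131pseudolocality} decay estimates whose constants depend only on the pseudolocality curvature control in the ball rather than on the global $K$, your scheme goes through; otherwise this is a genuine gap, and it is exactly what the cutoff $h$ in the paper's proof is designed to absorb.
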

\begin{proof}[Sketch of proof]
For simplicity  let's take $A=1$ in the pseudolocality theorem and assume WLOG that $|Rm|(t) \leq \frac{1}{t}$.

For any $\epsilon>0$, let $H(y,t)$ be the conjugate heat kernel centered at $(x,2\epsilon^2)$, i.e.
 \[(\frac{\partial}{\partial t} + \Delta_{g(t)} -S(t))H =0, \quad t \in (0, 2\epsilon^2),\]
\[ \lim_{t \to 2\epsilon^2} H = \delta_x.\]
Define the function $f$ by the relation $H = (4\pi (2\epsilon^2-t))^{-n/2} e^{-f}$. Choose a time dependent cut-off function $h$ as in \cite{perelman2002entropy}, {\blue which is defined as $h(y,t) = \phi(\frac{d(y,t)+ 10n \sqrt{t}}{40n \epsilon})$, where $d(y,t)$ is the distance function to the point $x$, $\phi$ is a nonincreasing function on $\mathbb{R}$ such that $\phi=1$ on $(-\infty, 1]$, $\phi = 0$ on $[2, \infty)$, $\phi^{\prime\prime} \geq - 10 \phi$ and $(\phi^\prime)^2 \leq 10 \phi$. Note that when $t < 2\epsilon^2$, $40n \epsilon < d(y,t)+10n \sqrt{t} < 80n \epsilon$ implies $20n \epsilon < d(y,t) < 80n \epsilon$, so $h(y,t) =1$ on $B_{g(t)}(x, 20n \epsilon)$ and is compactly supported on $B_{g(t)}(x, 80n \epsilon)$. Then we can apply Lemma 8.3 of \cite{perelman2002entropy} with $r_0=\sqrt{t}$ to derive
\[
(\frac{\partial}{\partial t} - \Delta) d \geq - \frac{5n}{\sqrt{t}},
\]
on the domain where $h$ is nonconstant. 
Thus we can calculate that $h$ satisfies $(\frac{\partial }{\partial t} - \Delta_{g(t)}) h \leq \frac{1}{160n^2\epsilon^2} h$.
}
Since $\lim_{t\to 2\epsilon^2} \int hH =1$, by examining $\frac{\partial}{\partial t}\int hH$ we can derive
\[ \int h H d\mu( \epsilon^2) \geq c(n).\]
Since $h$ is compactly supported in{\blue $B_{g(\epsilon^2)}(x, 80n\epsilon)$}, we have
\[
c(n)\leq C(n)\frac{Vol_{g(\epsilon^2)}B_{g(\epsilon^2)}(x,\epsilon)}{\epsilon^n} e^{-f_m},\]
where {\blue $f_m = \inf_{B_{g(\epsilon^2)}(x, 80n\epsilon)} f(\epsilon^2)$}, and we have used volume comparison theorem and the curvature bound implicitly. Clearly $f_m\to -\infty$ when the volume ratio goes to $0$.

By the Li-Yau type Harnack estimate for $H$ in \cite{chau2779131pseudolocality}, and the curvature bound $|Rm|(t) \leq \frac{2}{\epsilon^2}$ for $t\in[\epsilon^2/2, \epsilon^2]$, we can show that
\[ \int (\frac{\epsilon^2}{2} |\nabla f|^2 + f -n) h H d\mu(\frac{\epsilon^2}{2}) \leq C(n) + f_m,\]
which will be strictly negative when $f_m < -C(n)-1$. This shows after integration by parts that the local entropy at time $\frac{\epsilon^2}{2}$ is bounded from above by a negative number. Then a contradiction follows the same argument as in the proof of the pseudolocality theorem.

\end{proof}

If we assume that the initial Ricci curvature is bounded from below, we can prove the following variant by a different method, where the non-collapsing constant can be made arbitrarily close to the Euclidean value.
\begin{prop}\label{prop: short-time noncollapsing}
For any $n$, $k$, $Q$ and $\epsilon> 0$, there are constants $\delta_1$ and $\tau$ depending only on $n,k,Q$ and $\epsilon$, with the following property.
Let $g(t)$ be a complete Ricci flow solution on $M^n \times [0, \tau r^2]$ obtained from Theorem \ref{thm: short-time existence}, where $r>0$.
Suppose

(i) $Ric(g(0)) \geq -\frac{k}{r^2}$ on $B_{g(0)}(p, r)$,

(ii) The $\delta_1-$almost Euclidean isoperimetric inequality holds in $B_{g(0)}(p,r)$.

\noindent Then
\[Vol_{g(t)} B_{g(t)}(p, Q\sqrt{t} ) \geq (1-\epsilon)\omega_n Q^nt^\frac{n}{2}\]
for all $0< t \leq \tau r^2 $, where $\omega_n$ is the volume of the unit ball in $\mathbb{R}^n$.
\end{prop}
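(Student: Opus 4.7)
I would argue by parabolic rescaling and Hamilton compactness, using in a crucial way the $\kappa(n)$-noncollapsing provided by Lemma \ref{lem: Perelman's short-time noncollapsing}. By the scale invariance of all hypotheses and the conclusion, I normalize $r=1$. Suppose the proposition is false; then there exist $\epsilon_0>0$, sequences $\delta_1^{(i)}\to 0$ and $\tau_i\to 0$, Ricci flow solutions $(M_i,g_i(t))$ of the form produced by Theorem \ref{thm: short-time existence}, points $p_i\in M_i$, and times $t_i\in(0,\tau_i]$ satisfying
\[ Vol_{g_i(t_i)}B_{g_i(t_i)}(p_i,Q\sqrt{t_i}) \;<\; (1-\epsilon_0)\,\omega_n Q^n t_i^{n/2}. \]

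Parabolically rescale by $\hat g_i(s)=t_i^{-1}g_i(t_i s)$, $s\in[0,1]$. Then $|\widehat{Rm}|(s)\le A/s + o(1)$, the initial Ricci lower bound rescales to $Ric(\hat g_i(0))\ge -kt_i\to 0$, and the $\delta_1^{(i)}$-almost Euclidean isoperimetric inequality holds in balls of rescaled radius $1/\sqrt{t_i}\to\infty$. The hypothesized failure becomes $Vol_{\hat g_i(1)}B_{\hat g_i(1)}(p_i,Q)<(1-\epsilon_0)\omega_n Q^n$. Combining Bishop--Gromov with the almost Euclidean isoperimetric inequality at unboundedly large scales (or invoking Cheeger--Colding almost-rigidity of the volume cone), I would conclude that $(M_i,\hat g_i(0),p_i)\to(\mathbb{R}^n,g_{\mathrm{Euc}},0)$ in the pointed Gromov--Hausdorff sense.

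On $(0,1]$ the rescaled flows are uniformly $\kappa(n)$-noncollapsed by Lemma \ref{lem: Perelman's short-time noncollapsing} and have $|Rm|(s)\le A/s + o(1)$, so Hamilton's compactness theorem on each subinterval $[s_0,1]\subset(0,1]$ yields a smooth subsequential limit Ricci flow $(M_\infty,g_\infty(s),p_\infty)$. The heart of the argument is to identify $g_\infty$ with the static Euclidean flow; once this is done, smooth convergence at $s=1$ forces $Vol_{\hat g_i(1)}B_{\hat g_i(1)}(p_i,Q)\to\omega_n Q^n$, contradicting the displayed strict inequality. To identify $g_\infty$, I would re-run Perelman's pseudolocality argument at every base-point of $g_\infty(s)$ and at every scale: since the initial isoperimetric deficit $\delta_1^{(i)}$ vanishes in balls of radius $\to\infty$, the monotonicity of Perelman's entropy/reduced volume, started from $(\mathbb{R}^n,g_{\mathrm{Euc}})$, forces the exact Euclidean isoperimetric inequality to persist on every geodesic ball of $g_\infty(s)$ for every $s\in(0,1]$, and the rigidity case then gives $g_\infty(s)\equiv g_{\mathrm{Euc}}$.

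\textbf{Main obstacle.} The difficult step is precisely this rigidity identification of the limit: upgrading the almost-Euclidean initial data and the $\kappa(n)$-noncollapsed $A/s$ curvature bound on $(0,1]$ into the exact Euclidean flow. The rescaling and compactness parts are routine, but passing to the equality case of an isoperimetric or entropy inequality across the singular time $s=0$ requires care. An alternative route would be to compute the reduced volume $\widetilde V(\tau)$ of $g_\infty$ based at $p_\infty$ and show it equals the Euclidean value $(4\pi)^{n/2}$ in the limit $\tau\to 0$, which by Perelman's monotonicity and its rigidity case would force $g_\infty$ to be a gradient shrinking soliton that, together with the $A/s$ curvature decay, must be Euclidean.
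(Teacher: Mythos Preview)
Your scaffolding (contradiction, parabolic rescaling, Cheeger--Gromov/Hamilton compactness) matches the paper's, but you miss the one idea that dissolves the obstacle you flag. The paper does \emph{not} work with a fixed curvature bound $|Rm|\le A/s$ and then attempt a rigidity argument across the singular time. Instead, it exploits the assumption $\delta_1^{(i)}\to 0$ in a second way: for any $A_j\to 0$, pseudolocality (Theorem~\ref{thm: pseudolocality}) applies with constant $A_j$ once $\delta_1^{(i)}<\bar\delta(A_j)$, so after passing to a diagonal subsequence one has $|Rm|(\tilde g_j(s))\le 2A_j/s$ with $A_j\to 0$. The Cheeger--Gromov limit on $(0,1]$ is therefore \emph{flat} outright, with no need for entropy or reduced-volume rigidity. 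Your attempt to re-run pseudolocality ``on $g_\infty$'' is not quite right since pseudolocality requires a bounded-curvature flow, which the limit need not be at $s=0$; the correct move is to apply it along the sequence, before taking limits.

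The paper's endgame is also different from yours. Having a flat, $(1-\epsilon)$-collapsed limit $(M_\infty,g_\infty)$ at $s=1$, one knows it is a nontrivial quotient of $\mathbb{R}^n$ and hence has volume growth $\lesssim\rho^{n-1}$. Separately, the initial data $\tilde g_j(0)$ Gromov--Hausdorff converge (with Cheeger--Colding measure convergence) to a space $X$ with exact Euclidean volume $\omega_n\rho^n$. Using the two-sided distance distortion estimates (Lemmas~\ref{lem: distance lower bound} and~\ref{lem: distance upper bound}), the paper builds a surjective $1$-Lipschitz map $f:M_\infty\to X$ with coarse inverse Lipschitz control, forcing $\omega_n\rho^n\le C(\rho+C_2)^{n-1}$ for all $\rho$, a contradiction. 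This volume-growth comparison replaces the delicate rigidity step you propose; it is more elementary and avoids any analysis at the singular time $s=0$.
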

Stronger noncollapsing result can be proved when there is better control of the Ricci lower bound along the flow. In fact, when the dimension $n=3$, with the help of the Hamilton-Ivey  estimate, it has been proved in \cite{hochard2016short} that the volume is non-collapsed under a uniform scale.

In the rest of this section we will prove Proposition \ref{prop: short-time noncollapsing}, and analyse distance distortion under the Ricci flow solutions of Theorem \ref{thm: short-time existence}.

Since the solutions are obtained  as a limit of bounded curvature solutions, Proposition \ref{prop: short-time noncollapsing} follows from the lemma below. The proof explores a similar idea as in the metric lemma of \cite{hochard2016short} which depends on the continuity of measure under Gromov-Hausdorff convergence proved by Cheeger and Colding \cite{cheeger1997structure}, and we also need Perelman's pseudolocality.

\begin{lem}\label{lem: short-time noncollapsing}
For any $n$, $k$, $Q$ and $\epsilon> 0$, there are constants $\delta_1$ and $\tau$ depending only on $n,k,Q$ and $\epsilon$, with the following property.
Let $g(t)$ a complete Ricci flow solution with bounded curvature on $M^n \times [0, \tau]$.
Suppose

(i) $Ric(g(0)) \geq -k$ on $B_{g(0)}(p, 1)$,

(ii) The $\delta_1-$almost Euclidean isoperimetric inequality holds in $B_{g(0)}(p,1)$.

Then
\[Vol_{g(t)} B_{g(t)}(p, Q\sqrt{t} ) \geq (1-\epsilon)\omega_n Q^nt^\frac{n}{2}\]
for all $0< t \leq \tau$, where $\omega_n$ is the volume of the unit ball in $\mathbb{R}^n$.
\end{lem}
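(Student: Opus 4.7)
I argue by contradiction. If the lemma fails, one obtains sequences $\delta_{1,i}\to 0$, $\tau_i\to 0$, complete bounded-curvature Ricci flows $(M_i, g_i(s))_{s\in[0,\tau_i]}$ with base points $p_i$ satisfying (i) and (ii), and times $t_i\in(0,\tau_i]$ with
\[
Vol_{g_i(t_i)}B_{g_i(t_i)}(p_i, Q\sqrt{t_i})<(1-\epsilon)\omega_n Q^n t_i^{n/2}.
\]
Rescale by $t_i^{-1/2}$: the Ricci flows $\tilde g_i(s):=t_i^{-1}g_i(t_i s)$ live on $[0,\tau_i/t_i]\supset[0,1]$; their time-$0$ slices satisfy $Ric\geq -kt_i\to 0$ and the $\delta_{1,i}$-almost-Euclidean isoperimetric inequality on balls of radius $t_i^{-1/2}\to\infty$, while $Vol_{\tilde g_i(1)}B_{\tilde g_i(1)}(p_i,Q)<(1-\epsilon)\omega_n Q^n$.

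\textbf{Pseudolocality and compactness.} Fix a small parameter $A=A(n,\epsilon,Q)>0$ to be chosen below, with associated constants $\delta_0(n,A),\epsilon_0(n,A)$ from Theorem \ref{thm: pseudolocality}. For large $i$, $\delta_{1,i}\leq\delta_0$ and the scalar curvature bound $S(\tilde g_i(0))\geq -n(n-1)kt_i$ allows one to apply the theorem with $r_{\text{pl}}\sim 1/\sqrt{kt_i}\to\infty$, yielding $|Rm_{\tilde g_i}|(x,s)\leq A/s+o(1)$ uniformly on compact sets of $s\in(0,1]$. Combined with the noncollapsing from Lemma \ref{lem: Perelman's short-time noncollapsing}, Hamilton's compactness theorem produces a subsequential smooth pointed Cheeger-Gromov limit $(M_\infty, g_\infty(s), p_\infty)$ on $(0,1]$ with $|Rm_{g_\infty}|\leq A/s$ and
\[
Vol_{g_\infty(1)}B_{g_\infty(1)}(p_\infty,Q)\leq (1-\epsilon)\omega_n Q^n.
\]

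\textbf{Identifying the limit and contradiction.} Integrating the isoperimetric inequality against the coarea inequality gives $Vol_{\tilde g_i(0)}B_{\tilde g_i(0)}(p_i,R)\geq (1-\delta_{1,i})\omega_n R^n$ for $R\leq t_i^{-1/2}$, while Bishop-Gromov with $Ric\geq -kt_i$ provides the matching upper bound as $i\to\infty$. Gromov's precompactness (using only the uniform bound $Ric\geq -k$) and Cheeger-Colding's continuity of measure \cite{cheeger1997structure} under measured Gromov-Hausdorff convergence then force a subsequence of the initial slices to converge in pointed measured GH sense to $(\mathbb{R}^n, 0, \mathcal{L}^n)$. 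Perelman's distance-distortion estimate, applied to the curvature bound $|Rm|\leq A/s$ (for which $\sqrt{|Rm|}$ \emph{is} integrable in $s$), delivers
\[
|d_{\tilde g_i(s)}(x,y)-d_{\tilde g_i(s')}(x,y)|\leq C(n)\sqrt{A}\,|\sqrt{s}-\sqrt{s'}|
\]
uniformly on compact sets for $0<s',s\leq 1$, and hence for the limit $g_\infty$. Letting $s'\to 0^+$ shows that $(M_\infty, d_{g_\infty(1)}, p_\infty)$ is $C(n)\sqrt A$-Gromov-Hausdorff close to $(\mathbb{R}^n, d_{\rm Euc}, 0)$. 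Since $g_\infty(1)$ is smooth with $|Rm|\leq A$, in particular has a uniform Ricci lower bound, a second application of Cheeger-Colding's continuity of measure converts this closeness into
\[
Vol_{g_\infty(1)}B_{g_\infty(1)}(p_\infty,Q)\geq (1-\eta(A))\omega_n Q^n,\qquad \eta(A)\to 0\text{ as }A\to 0.
\]
Choosing $A$ so that $\eta(A)<\epsilon/2$ contradicts the inequality from the previous step.

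\textbf{Main obstacle.} The delicate point is transferring the initial hypothesis to time $s=1$ under the non-integrable curvature bound $|Rm|\leq A/s$. The key idea, in the spirit of Hochard's metric lemma, is that \emph{distance} (unlike volume) is controlled via $\sqrt{|Rm|}$ through Perelman's distance-distortion estimate, which \emph{is} integrable in $s$; Cheeger-Colding's continuity of measure is then invoked twice, at $s=0$ to identify the GH-limit of the initial slices as Euclidean space and at $s=1$ to convert the resulting GH-closeness back into a near-Euclidean volume lower bound.
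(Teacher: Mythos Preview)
Your overall skeleton—contradiction, rescaling, pseudolocality, Hamilton compactness for $s\in(0,1]$, and identifying the Gromov–Hausdorff limit of the initial slices as $\mathbb{R}^n$ via Cheeger–Colding—matches the paper. The gap is in the step transferring information from $s=0$ to $s=1$.

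The two-sided bound you claim,
\[
|d_{\tilde g_i(s)}(x,y)-d_{\tilde g_i(s')}(x,y)|\leq C(n)\sqrt{A}\,|\sqrt{s}-\sqrt{s'}|,
\]
does not follow from Perelman's distance-distortion estimate. Perelman's lemma (Lemma~\ref{lem: derivative of changing distance}) only bounds $\partial_s d$ from \emph{below}: taking $r_0=\sqrt{s/A}$ gives $\partial_s d\geq -C(n)\sqrt{A/s}$, hence $d_{\tilde g_i(s')}\leq d_{\tilde g_i(s)}+C(n)\sqrt{A}(\sqrt{s}-\sqrt{s'})$ for $s'<s$. The reverse inequality—controlling how much distances can \emph{expand}—would require an integrable Ricci \emph{lower} bound along the flow, and the only available bound $Ric\geq -CA/s$ is not integrable at $s=0$. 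Without the expand direction you cannot conclude that $(M_\infty,d_{g_\infty(1)})$ is $C\sqrt{A}$-GH-close to $\mathbb{R}^n$, so the second application of Cheeger–Colding at $s=1$ is unjustified.

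The paper handles this asymmetry differently. It sends the pseudolocality constant $A_j\to 0$ along the contradicting sequence, so the Cheeger–Gromov limit $(M_\infty,g_\infty)$ is \emph{flat} for every $s\in(0,1]$, hence a proper quotient of $\mathbb{R}^n$ with volume growth $\leq C_\infty\rho^{n-1}$. For the expand direction of distance it invokes the volume-packing argument of Lemma~\ref{lem: distance upper bound} (which in turn uses the $\kappa(n)$-noncollapsing of Lemma~\ref{lem: Perelman's short-time noncollapsing}); that lemma only yields a \emph{multiplicative} control $d_{g(t)}\leq C(n)d_{g(0)}+C(n)$ with a constant that is not small. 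Combined with the one-sided Perelman bound, this produces a surjective coarse quasi-isometry $f:M_\infty\to X$ (not an $\epsilon$-isometry), and the contradiction comes from comparing large-scale volume growth: $\omega_n\rho^n=\mathcal{H}^n_X(B_X(\rho))\leq \mathcal{H}^n_{M_\infty}(B_{M_\infty}(C\rho+C))\leq C'\rho^{n-1}$ for all large $\rho$, which is impossible.
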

\begin{proof}
We only need to show the proof for $Q=1$.
Suppose the claim is not true, then for some $n, k, \epsilon$, there is a sequence of $\delta_i \to 0$, a sequence of $t_i \to 0$, and a sequence of pointed Ricci flow solutions $(M_i, g_i(t), p_i), i=1,2,...$  with $Ric(g_i(0))\geq -kg_i(0)$ and the $\delta_i-$almost Euclidean isoperimetric inequality holds on $B_{g_i(0)}(p_i,1)$. In addition, since these solutions are smooth, we can choose $t_i$ such that
\[Vol_{g_i(t)} B_{g_i(t)}(p_i, \sqrt{t}) > (1-\epsilon)\omega_n t^\frac{n}{2}\]
for any $0 < t < t_i $, while
\[Vol_{g_i(t_i)}B_{g(t_i)}(p_i, \sqrt{t_i}) = (1-\epsilon) \omega_n t_i^\frac{n}{2}.\]

Let $A_j \to 0$ be a sequence of positive numbers decreasing to $0$. For each $A_j$, let $\bar{\delta}_j>0$ and $\epsilon_j >0$ be the claimed constants in Theorem \ref{thm: pseudolocality}. Since $\delta_i \to 0$, for each $j$, we have $\delta_i< \bar{\delta}_j$ when $i$ is large enough. Thus Theorem \ref{thm: pseudolocality} yields curvature estimates
\[|Rm|(g_i(x,t)) \leq \frac{A_j}{t} + \frac{1}{(\epsilon_j r_j)^2}, \quad x \in B_{g_i(t)}(p_i, \frac{1}{2}),\quad t\in (0, (\epsilon_j r_j)^2 ],\]
when $i$ is large enough. Since $t_i \to 0$, we can find a subsequence of $\{t_i\}$, which we denote as $\{t_j\}$ for simplicity, such that $t_j < A_j (\epsilon_j r_j)^2$, hence
\[|Rm|(g_j(x,t)) \leq \frac{2A_j}{t}, \quad x \in B_{g_j(t)}(p_j, \frac{1}{2}), \quad t\in ( 0, t_j ], \quad j=1,2,...\]
Now we dilate this sequence of solutions to normalize their existence time intervals on which we have the above curvature control. Define
\[\tilde{g}_j(t) = t_j^{-1} g(t_j t), \quad t\in [0,1], \quad j=1,2,...\]
We have
\[|Rm|(\tilde{g}_j(x,t)) \leq \frac{2A_j}{t}, \quad x \in B_{\tilde{g}_j(t)}(p_j, \frac{1}{2\sqrt{t_j}}),\]
\[Vol_{\tilde{g}_j(t)}B_{\tilde{g}_j(t)}(p_j, \sqrt{t}) > (1-\epsilon)\omega_n t^\frac{n}{2}\]
for $t\in (0,1)$, and
\[Vol_{\tilde{g}_j(1)}B_{\tilde{g}_j(t)}(p_j, 1) = (1-\epsilon)\omega_n.\]

A subsequence of the solutions $\{ B_{\tilde{g}_j(t)}(p_j, \frac{1}{2\sqrt{t_j}}), \tilde{g}_j(t), t\in(0,1], p_j \}$ converge smoothly in the pointed Cheeger-Gromov sense to a Ricci flow solution $(M_{\infty}, \tilde{g}_{\infty}, t\in(0,1], p_{\infty})$. Moveover, the convergence is uniform on $\Omega \times[s,1]$ for any compact domain $\Omega \subset M_\infty$ and $s>0$. The limit solution is complete since the radius $\frac{1}{2\sqrt{t_j}} \to \infty$ as $j\to \infty$. And the limit metric $g_\infty(t)$ is flat for any $t\in(0,1]$ since $A_j \to 0$. Therefore, each time slice of the limit solution $(M_\infty, g_\infty(t))$ is the same quotient of the Euclidean space $\mathbb{R}^n$, so we can drop the time variable and denote it as $(M_\infty, g_\infty, p_\infty)$. Clearly it has
\[Vol_{g_\infty}B_{g_\infty}(p_\infty, 1) = (1-\epsilon)\omega_n,\]
which implies that the smooth manifold $(M_\infty, g_\infty)$ is a nontrivial quotient of $\mathbb{R}^n$, so there is some constant $C_\infty$, such that
\[Vol_{g_\infty} B_{g_\infty}(p_\infty, \rho) \leq C_\infty \rho^{n-1}\]
when $\rho$ is large.

The initial data $\{B_{\tilde{g}_j(0)}(p_j, \frac{1}{2\sqrt{t_j}}), g(0), p_j \}$ converges subsequentially in the the pointed Gromov-Hausdorff sense to a metric space $(X, d_X, p_X)$. Since this sequence is not collapsed, by \cite{cheeger1997structure}, the Riemannian measure converges to the $n-$dimensional Hausdorff measure on $X$. Thus the $\delta_j-$almost Euclidean isoperimetric inequalities with $\delta_j \to 0$ imply that
\[\mathcal{H}_X^n (B_X(p_X, \rho)) = \omega_n \rho^n\]
for all $\rho>0$.

For any $\rho >0$, we can apply Lemma \ref{lem: distance lower bound} and Lemma \ref{lem: distance upper bound} when $j$ is large enough to obtain
\begin{equation}\label{eqn: distance distortion control}
 c_1(n)d_{\tilde{g}_j(t)}(x,y)-C_2(n) \leq d_{\tilde{g}_j(0)} (x,y) \leq d_{\tilde{g}_j(t)}(x,y) + 2(n-1)(2A_j + 1) \sqrt{t}
\end{equation}
for $x,y\in B_{\tilde{g}_j(t)}(p_j, \rho)$ and $t\in[0,1]$.

\emph{Claim}: There exists a surjective map $f: M_\infty \to X$ with $f(p_\infty)=p_X$, which satisfies
\[c_1d_{g_\infty} (y_1, y_2) -C_2 \leq  d_X(f(y_1),f(y_2)) \leq d_{g_\infty}(y_1,y_2) \quad all \quad y_1,y_2  \in M_\infty.\]

\emph{Proof of Claim}: For simplicity, we define $X_j$ to be $B_{\tilde{g}_j(0)}(p_j, \frac{1/4 - r_j}{\sqrt{t_j}})$ equipped with the metric $\tilde{g}_j(0)$; and define $Y_j(t)$ to be $B_{\tilde{g}_j(t)}(p_j, \frac{1/4 - r_j}{\sqrt{t_j}})$ with $\tilde{g}_j(t)$. And we will omit the isometries involved in the pointed convergence.

Let $\mathcal{D}=\{y_i\}$ be a countable dense subset of $M_\infty$. Let's first fix a $t\in (0,1]$. For each $y_i$, let $\{y_i^j\}$ be its lifts to the converging sequence $Y_j(t)$.  Equation (\ref{eqn: distance distortion control}) guarantees that $\{y_i^j\}$ is a bounded sequence in $X_j$ when $j$ is large enough, hence we can pass to a subsequence of $\{X_j\}$ (which clearly still converges to the same $X$), so that $\{y_i^j\}$ converge to some point in $x_i \in X$. We define $f(y_i)=x_i$. By passing to a diagonal sequence we can define $f$ on $\mathcal{D}$. It's clear that we can define $f(p_\infty) =p_X$.

To make $f$ surjective, let $\mathcal{E}=\{x_k\}$ be a countable dense subset of $X$, we will construct $f$ so that $\mathcal{E}$ is in the image. For each $x_k$, let $\{x_k^j\}$ be its lifts in the converging sequence $X_j$. (\ref{eqn: distance distortion control}) guarantees that $\{x_k^j\}$ is a bounded sequence in $Y_j(t)$ for $j$ large enough. By passing to a subsequence of $\{Y_j(t)\}$ we can have $\{x_k^j\}$ converge to some point $y_k \in M_\infty$. We add $y_k$ to $\mathcal{D}$ and define $f(y_k)=x_k$, clearly this agrees with the definition in the previous paragraph.

Equation (\ref{eqn: distance distortion control}) implies that
\[d_X(f(y_1), f(y_2)) \leq d_{g_\infty}(y_1,y_2) + 2(n-1)\sqrt{t}\quad for \quad any \quad y_1, y_2 \in \mathcal{D}.\]
Since $\{Y_j(t)\}$ converges to the same $(M_\infty, g_\infty)$ for all $t\in(0,1]$, we can let $t\to 0$ and pass to a subsequence again to define $f$, such that
\[d_X(f(y_1), f(y_2)) \leq d_{g_\infty}(y_1,y_2) \quad for \quad any \quad y_1, y_2 \in \mathcal{D}.\]
Now, $f$ is defined on a dense subset $\mathcal{D}$, it's uniformly Lipschitz continuous, and its image contains a dense subset $\mathcal{E}$, thus $f$ can be extended to a surjection from $M_\infty$ to $X$. The first inequality in (\ref{eqn: distance distortion control}) naturally passes to the limit. So the claim is proved.

\vspace{10pt}

By the definition of Hausdorff measure we have
\[\mathcal{H}_X^n(\Omega) \leq \mathcal{H}_{M_\infty}^n(f^{-1}(\Omega)), \quad \Omega \subset X.\]
Since $f^{-1}(B_X(p_X, \rho)) \subset B_{g_\infty}(p_\infty, c_1^{-1}(\rho+C_2))$, and the Hausdorff measure coincide with the Riemannian measure on a Riemmannian manifold, we have
\[\mathcal{H}_X^n(B_X(p_X, \rho)) \leq \mathcal{H}_{M_\infty}^n (f^{-1}(B_X(p_X, \rho)))\leq \mathcal{H}_{M_\infty}^n (B_{g_\infty}(p_\infty, c_1^{-1}(\rho+C_2))),\]
which, by previous analysis, implies that
\[\omega_n \rho^n \leq C_\infty c_1^{-n-1}(\rho+C_2)^{n-1}\]
for all $\rho>0$, clearly this is a contradiction.
\end{proof}
The following lemma of Perelman \cite{perelman2002entropy} is frequently used in the study of Ricci flows.
{\blue
\begin{lem}[Perelman]\label{lem: derivative of changing distance}
Let $(M^n,g(t))$, $t \in [0,T)$, be a Ricci flow solution, for any $t_0 \in [0,T)$ and $x_0, x_1 \in M$, suppose
$Ric(x,t_0) \leq (n-1)K$ for all $x\in B_{g(t_0)}(x_0, r_0) \cup B_{g(t_0)}(x_1, r_0)$ for some $K \geq 0$ and $r_0 >0$. Assume also that $M$ contains a tubular neighbourhood of the shortest geodesic joining $x_0$ and $x_1$. Then
\[ \frac{\partial}{\partial t} |_{t=t_0} d_{g(t_0)}(x_0, x_1) \geq - 2(n-1)\left(Kr_0 +\frac{1}{r_0}\right).\]
\end{lem}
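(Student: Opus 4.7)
The plan is to combine the first and second variation formulas for arc length along a minimizing geodesic realizing $d_{g(t_0)}(x_0,x_1)$. Let $\gamma:[0,\ell]\to M$ denote a unit-speed minimizing $g(t_0)$-geodesic from $x_0$ to $x_1$ of length $\ell=d_{g(t_0)}(x_0,x_1)$. Under Ricci flow the first variation of length gives
\[ \frac{d}{dt}L_{g(t)}(\gamma)\Big|_{t=t_0}=-\int_0^\ell \mathrm{Ric}(\gamma',\gamma')\,ds, \]
so the bulk of the argument is to establish the integrated Ricci upper bound
\[ \int_0^\ell \mathrm{Ric}(\gamma',\gamma')\,ds \le 2(n-1)\left(Kr_0+\frac{1}{r_0}\right). \]

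For this estimate I would use the second variation. Fix a parallel orthonormal frame $\{e_i\}_{i=1}^{n-1}$ along $\gamma$ orthogonal to $\gamma'$ and a scalar cutoff $\phi$ on $[0,\ell]$ with $\phi(0)=\phi(\ell)=0$. Minimality of $\gamma$ forces $I(\phi e_i,\phi e_i)\ge 0$ for each $i$, and summing (using $\sum_i R(\gamma',e_i,\gamma',e_i)=\mathrm{Ric}(\gamma',\gamma')$) yields
\[ \int_0^\ell \phi^2\,\mathrm{Ric}(\gamma',\gamma')\,ds \le (n-1)\int_0^\ell (\phi')^2\,ds. \]
Assuming $\ell\ge 2r_0$, I would take $\phi$ to be the tent function equal to $s/r_0$ on $[0,r_0]$, equal to $1$ on $[r_0,\ell-r_0]$, and equal to $(\ell-s)/r_0$ on $[\ell-r_0,\ell]$. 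Then $\int(\phi')^2\,ds=2/r_0$, and the support of $1-\phi^2$ has length $2r_0$ and lies inside $B_{g(t_0)}(x_0,r_0)\cup B_{g(t_0)}(x_1,r_0)$ since $\gamma$ is minimizing. Splitting $\int\mathrm{Ric}=\int\phi^2\mathrm{Ric}+\int(1-\phi^2)\mathrm{Ric}$ bounds the first piece by $2(n-1)/r_0$ via the second-variation inequality and bounds the second by $2(n-1)Kr_0$ using the hypothesis $\mathrm{Ric}(\gamma',\gamma')\le(n-1)K$ on the support of $1-\phi^2$. When $\ell<2r_0$ the whole geodesic lies in the union of the two balls by the triangle inequality, and the pointwise Ricci upper bound alone yields a strictly stronger estimate.

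The main obstacle is promoting this integrated curvature bound to a one-sided derivative inequality for $t\mapsto d_{g(t)}(x_0,x_1)$, which is only Lipschitz in $t$ because of cut-locus issues and possible multiple minimizers; the stated inequality must be read as a lower-Dini-derivative (or one-sided barrier) bound. For this I would pick, for each small $h>0$, a minimizing $g(t_0+h)$-geodesic $\gamma_h$ from $x_0$ to $x_1$. Then $d_{g(t_0)}(x_0,x_1)\le L_{g(t_0)}(\gamma_h)$ while $d_{g(t_0+h)}(x_0,x_1)=L_{g(t_0+h)}(\gamma_h)$, so
\[ d_{g(t_0+h)}(x_0,x_1)-d_{g(t_0)}(x_0,x_1)\ge L_{g(t_0+h)}(\gamma_h)-L_{g(t_0)}(\gamma_h). \]
Applying the integrated Ricci estimate to $\gamma_h$ at its minimizing time $t_0+h$---the hypothesis persists on slightly enlarged endpoint balls for $h$ small by smoothness of the flow---and passing to $\liminf_{h\to 0^+}$ produces the desired lower bound on the forward derivative of $d_{g(t)}(x_0,x_1)$ at $t_0$.
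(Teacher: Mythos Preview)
The paper does not supply its own proof of this lemma; it simply states the result and attributes it to Perelman \cite{perelman2002entropy}. Your argument is correct and is exactly the standard proof going back to Perelman: first variation of arc length under Ricci flow, then the second-variation (index-form) inequality along a minimizing geodesic with the piecewise-linear tent cutoff supported in the two endpoint balls, followed by the split $\int\mathrm{Ric}=\int\phi^2\mathrm{Ric}+\int(1-\phi^2)\mathrm{Ric}$.

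One minor remark on the final step. The cleaner and more commonly used way to upgrade the integrated bound to a one-sided derivative inequality is to fix a minimizing geodesic $\gamma$ at time $t_0$ and compare \emph{backward}: for $h>0$, $d_{g(t_0-h)}(x_0,x_1)\le L_{g(t_0-h)}(\gamma)$ while $d_{g(t_0)}(x_0,x_1)=L_{g(t_0)}(\gamma)$, giving directly
\[
\liminf_{h\to 0^+}\frac{d_{g(t_0)}(x_0,x_1)-d_{g(t_0-h)}(x_0,x_1)}{h}\ge -\int_0^\ell\mathrm{Ric}_{g(t_0)}(\gamma',\gamma')\,ds.
\]
This uses the curvature hypothesis exactly at $t_0$ and avoids the continuity-in-$t$ argument you invoke for $\gamma_h$. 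Your forward-difference version also works, but requires the extra (true, yet slightly fussy) observation that the Ricci bound persists on nearby balls for nearby times. Either way, the inequality should indeed be read in the barrier/Dini sense, as you correctly note.
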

When the curvature is bounded by a function $\frac{A}{t}$, the above lemma implies the following distance distortion estimate from below:
\begin{lem}\label{lem: distance lower bound}
Let $(M^n,g(t))$, $t\in[0,T)$ be a Ricci flow solution. Suppose there is a constant $A$, $|Ric|(x,t) \leq \frac{A}{t}$ for any $t\in(0,T)$ and $x\in B_{g(t)}(p, r + \sqrt{t})$. Then for any $x, y \in B_{g(t)}(p, r- 4(n-1)(A+1)\sqrt{t})$, we have
\[d_{g(t)}(x,y) \geq d_{g(0)}(x,y) - 4(n-1)(A+1)\sqrt{t}.\]
\end{lem}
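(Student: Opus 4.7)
The plan is to apply Perelman's derivative-of-distance lemma (Lemma \ref{lem: derivative of changing distance}) at each intermediate time $s \in (0, t]$ with the auxiliary radius $r_0 = \sqrt{s}$, and then integrate from $0$ to $t$. With $(n-1)K(s) = A/s$ dictated by the curvature hypothesis, the bound becomes
$$
\frac{\partial}{\partial s} d_{g(s)}(x, y) \geq -2(n-1)\left( K(s)\sqrt{s} + \frac{1}{\sqrt{s}} \right) = -\frac{2(A + n - 1)}{\sqrt{s}}.
$$
Integrating from $0$ to $t$, using continuity of $s \mapsto d_{g(s)}(x, y)$ at $s = 0$ and the integrability of $1/\sqrt{s}$, yields $d_{g(t)}(x, y) \geq d_{g(0)}(x, y) - 4(A + n - 1)\sqrt{t}$, which is slightly stronger than the stated bound since $A + n - 1 \leq (n-1)(A+1)$ whenever $n \geq 2$ and $A \geq 0$.

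The only subtlety is justifying that Perelman's lemma is applicable at every $s \in (0, t]$: this requires $|Ric|(\cdot, s) \leq A/s$ on $B_{g(s)}(x, \sqrt{s})$ and $B_{g(s)}(y, \sqrt{s})$, which by the triangle inequality reduces to $d_{g(s)}(p, x) < r$ and $d_{g(s)}(p, y) < r$. I plan to establish this via a backward continuity argument: set
$$
T^* = \inf \set{ \tau \in [0, t] : d_{g(s)}(p, x) < r \text{ and } d_{g(s)}(p, y) < r \text{ for every } s \in [\tau, t] },
$$
and aim to prove $T^* = 0$. At $s = t$ the hypothesis $x, y \in B_{g(t)}(p, r - 4(n-1)(A+1)\sqrt{t})$ immediately gives strict inequality, so $T^* < t$. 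On $(T^*, t]$ Perelman's lemma applies to each of the pairs $(p, x)$ and $(p, y)$, and the same integration run backward in time gives
$$
d_{g(s)}(p, x) \leq d_{g(t)}(p, x) + 4(A + n - 1)(\sqrt{t} - \sqrt{s}) \leq r - 4(n-2)A\sqrt{t}
$$
for every $s \in (T^*, t]$, and hence at $s = T^*$ by continuity. For $n \geq 3$ and $A > 0$ this is strictly less than $r$, so by continuity of $s \mapsto d_{g(s)}(p, x)$ the strict inequality persists in a left neighborhood of $T^*$, contradicting the definition of $T^*$ unless $T^* = 0$. The boundary cases $n = 2$ or $A = 0$ are handled by first proving the inequality with $r$ replaced by any $r' < r$ and then passing to the limit $r' \nearrow r$.

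The main obstacle is this circular dependence: applying Perelman's lemma requires a distance bound to locate the auxiliary ball inside the region of controlled curvature, yet the distance bound is the very object we aim to deduce from it. The backward continuity argument decouples the two, after which the proof reduces to elementary calculus with the judicious choice $r_0 = \sqrt{s}$.
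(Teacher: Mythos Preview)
Your proof is correct and follows essentially the same strategy as the paper: apply Perelman's lemma with the scale $r_0=\sqrt{s}$ and integrate $-C/\sqrt{s}$ from $0$ to $t$, handling the circularity (the curvature bound is only assumed on moving balls) via a continuity/bootstrap argument. The paper organizes that bootstrap slightly differently---it fixes the largest radius $\rho$ with $B_{g(t)}(p,\rho)\subset B_{g(s)}(p,r)$ for all $s\in[0,t]$ and then shows $\rho\ge r-4(n-1)(A+1)\sqrt t$---whereas you run a backward-in-time argument on the endpoints $p,x$ and $p,y$ separately; these are equivalent formulations of the same idea. One small remark: your separate treatment of the cases $n=2$ or $A=0$ is unnecessary, since keeping the $-4(A+n-1)\sqrt{T^*}$ term in your estimate already gives $d_{g(T^*)}(p,x)<r$ strictly whenever $T^*>0$.
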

}
\begin{proof}
Suppose $\rho$ is the largest number such that $B_{g(t)}(p, \rho) \subset B_{g(s)}(p,r)$ for all $s\in[0,t]$. For any $x,y \in B_{g(t)}(p,\rho)$ and any $s\in[0,t]$, we can take $r_0=\sqrt{s}$ in Lemma \ref{lem: derivative of changing distance} which yields
\[ \frac{\partial}{\partial t} |_{t=s} d_{g(t)}(x,y) \geq - \frac{2(n-1)(A+1)}{\sqrt{s}}.\]
Hence
\[ d_{g(t)}(x,y) \geq d_{g(s)}(x,y) - 4(n-1)(A+1)(\sqrt{t}-\sqrt{s}),\]
which implies $B_{g(t)}(p, r - 4(n-1)(A+1) (\sqrt{t} - \sqrt{s}) ) \subset B_{g(s)}(p,r)$ for all $s\in[0,t]$.
Therefore $\rho \geq r - 4(n-1)(A+1)\sqrt{t}$, and we have the claimed result.
\end{proof}
Our strategy to obtain a distance distortion upper estimate is through controlling volume inflation of large domains, and controlling volume of small geodesic balls from below. Since the volume element under the Ricci flow satisfies
\[\frac{\partial}{\partial t} d\mu = -S d\mu,\]
the following local estimate of scalar curvature lower bound proved by B.L.Chen \cite{chen2007strong} can help us control the volume. (Note that when the curvature is bounded by $\frac{A}{t}$, we can choose a time dependent cut-off function as in the proof of Lemma \ref{lem: Perelman's short-time noncollapsing}, then the original proof of Proposition 2.1 in \cite{chen2007strong} still works.)
\begin{lem}[B.L.Chen]\label{lem: local scalar curvature lower bound}
Let $(M^n, g(t)), t\in[0,T)$ be a Ricci flow solution. Suppose for some $r \geq 2\sqrt{T}$, $B_{g(t)}(p,2r)$ is relatively compact in $M$ for each $t$, $|Ric|(g(t)) \leq \frac{A}{t}$ on $B_{g(t)}(p,2r)$, and $S(g(0)) \geq -k$ on $B_{g(0)}(p, 2r)$,
then
\[S(x,t) \geq - \max\{ k, \frac{C(n,A)}{r^2}\},\]
where $x\in B_{g(t)}(p, r)$, $t\in[0,T)$, and $C(n,A)$ is some constant depending only on $n$ and $A$.
\end{lem}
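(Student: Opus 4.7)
My plan is to follow B.L.\ Chen's local maximum-principle argument \cite{chen2007strong}, with the modification indicated in the statement: under the degenerate curvature bound $|\mathrm{Ric}|\leq A/t$, a time-dependent space-time cut-off (of the type used in the proof of Lemma \ref{lem: Perelman's short-time noncollapsing}) replaces Chen's original time-independent one. The driving identity is the scalar curvature evolution
\[\partial_t S = \Delta S + 2|\mathrm{Ric}|^2 \;\geq\; \Delta S + \tfrac{2}{n} S^2,\]
whose quadratic source term prevents $S$ from remaining very negative in a region of bounded size.

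I would first construct a space-time cut-off $\eta$ of the form $\eta(x,t) = \phi(d_{g(t)}(p,x)/r)$ with $\phi$ smooth, non-increasing, $\phi\equiv 1$ on $(-\infty,1]$, $\phi\equiv 0$ on $[2,\infty)$, and $|\phi'|^2\leq C\phi$. Lemma \ref{lem: derivative of changing distance} applied with $r_0 = \sqrt{t}$, combined with $|\mathrm{Ric}|\leq A/t$, yields in the barrier sense
\[\partial_t d_{g(t)}(p,\cdot) \;\geq\; -\frac{C(n,A)}{\sqrt t},\]
which is integrable on $[0,T]$ with total distortion $\leq C(n,A)\sqrt T$. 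The hypothesis $r\geq 2\sqrt T$ is exactly what keeps this distortion dominated by $r$, so that $\{\eta(\cdot,t)>0\}\subset B_{g(t)}(p,2r)$ for all $t$. Together with Laplacian comparison applied to the Ricci lower bound, this yields Perelman-type bounds of the form
\[(\partial_t - \Delta)\eta \;\geq\; -\frac{C(n,A)}{r^2}, \qquad |\nabla\eta|^2 \;\leq\; \frac{C(n,A)}{r^2}\,\eta\]
(after standard smoothing of $\phi$).

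Next, set $K = k + C_0(n, A)/r^2$ for a constant $C_0$ to be chosen large, and consider $F(x,t) = \eta^2(S+K)$. The hypothesis $S(g(0))\geq -k$ on $B_{g(0)}(p,2r)\supset \mathrm{supp}\,\eta(\cdot,0)$ gives $F(\cdot,0)\geq 0$, and $F$ has compact spatial support, so its infimum over $M\times[0,t_*]$ is attained for each $t_*<T$. If this infimum were negative, then at a first negative minimum $(x_0,t_0)$ we would have $\nabla F = 0$, $\Delta F\geq 0$, $\partial_t F\leq 0$, and $\eta(x_0,t_0)>0$. Substituting $\nabla S = -2(S+K)\nabla\eta/\eta$ into the expansion of $(\partial_t-\Delta)F$ and applying the cut-off bounds yields
\[0 \;\geq\; (\partial_t-\Delta)F \;\geq\; \tfrac{2}{n}\eta^2 S^2 - \frac{C_1(n,A)}{r^2}\,\eta\,|S+K|.\]
Since $|S+K|\leq |S|$ and $|S|> K \geq C_0/r^2$ at the minimum, this forces $\eta|S|\leq C_2(n,A)/r^2$, hence $\eta\leq C_2/C_0$ and $|F|\leq \eta^2|S|\leq C_2^2/(C_0 r^2)$. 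Choosing $C_0$ large enough makes this arbitrarily small compared to $K$, and on $\{\eta = 1\}\supset B_{g(t)}(p,r)$ we conclude $S\geq -K - C_2^2/(C_0 r^2)$, which after absorbing the correction into $C_0$ is of the claimed form $S\geq -\max\{k,\,C(n,A)/r^2\}$.

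The main obstacle is the cut-off construction under the singular bound $A/t$; the crucial analytic input is that $\int_0^T t^{-1/2}\,dt = 2\sqrt T$ is controlled by $r$, which is exactly the hypothesis $r\geq 2\sqrt T$. Once the cut-off is in hand, the maximum-principle computation is routine, as in the proof of Proposition 2.1 in \cite{chen2007strong}.
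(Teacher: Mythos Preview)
Your approach is exactly what the paper does: the paper gives no proof beyond the parenthetical remark preceding the lemma that, with a time-dependent cut-off as in the proof of Lemma~\ref{lem: Perelman's short-time noncollapsing}, Chen's original argument for Proposition~2.1 of \cite{chen2007strong} goes through. Your proposal is a faithful elaboration of that remark.

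One technical point needs tightening. For the negative-minimum computation, the term $2\eta(S+K)(\partial_t-\Delta)\eta$ with $S+K<0$ requires an \emph{upper} bound on $(\partial_t-\Delta)\eta$, not the lower bound you state. With your cut-off $\eta=\phi(d_{g(t)}(p,\cdot)/r)$, Perelman's estimate $(\partial_t-\Delta)d\geq -C(n,A)/\sqrt t$ together with $\phi'\leq 0$ gives only $(\partial_t-\Delta)\eta\leq C(n,A)\sqrt{\eta}/(r\sqrt t)+C/r^2$, and the singular factor $1/(r\sqrt t)$ does not reduce to $1/r^2$. The cut-off actually used in pseudolocality (and alluded to by the paper) shifts the distance: one takes $\eta=\phi\bigl((d_{g(t)}(p,\cdot)+C(n,A)\sqrt t)/r\bigr)$, so that $(\partial_t-\Delta)(d+C\sqrt t)\geq 0$ in the barrier sense and hence $(\partial_t-\Delta)\eta\leq C/r^2$ coming solely from the $\phi''$ term. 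The hypothesis $r\geq 2\sqrt T$ is then used exactly as you say, to ensure the shifted support still sits inside $B_{g(t)}(p,2r)$ and that $\{\eta=1\}$ still contains a ball of radius comparable to $r$. With this modification your maximum-principle computation closes and yields the stated bound.
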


\begin{lem}\label{lem: distance upper bound}
For any $n,k,A,v_0>0$, there is a constant $\Lambda(n,A)$, with the following property.
Let $(M^n,g(t))$, $t\in[0,T)$ be a Ricci flow solution, $r> (1+2B)\sqrt{T}$ where $B = 4(n-1)(A+1)$, and $\tilde{r} = \Lambda e^{T\max\{k,1/r^2\}}v_0^{-1}r$. Suppose $B_{g(t)}(p, 2\tilde{r}+\sqrt{t})$ is relatively compact in $M$ for each $t\in[0,T]$, and

(i) $Ric(g(0)) \geq -k g(0)$ on $B_{g(0)}(p, 2r)$,

(ii) $|Ric|(x,t) \leq \frac{A}{t}$ for all $x\in B_{g(t)}(p, 2\tilde{r})$, $t\in(0,T)$;

(iii) $Vol_{g(t)}B_{g(t)}(x, \sqrt{t}) \geq v_0 t^\frac{n}{2}$ for all $x\in B_{g(t)}(p,2\tilde{r})$ and $t\in(0,T]$.

\noindent Then for any $x,y \in B_{g(0)}(p,r)$, and for any $t\in[0,T]$, we have
\[d_{g(t)}(x,y) \leq \frac{\Lambda e^{T\max\{k,1/r^2\}}}{v_0} \sqrt{T} \quad when \quad d_{g(0)}(x,y) < 2B\sqrt{T},\]
\[d_{g(t)}(x,y) \leq \frac{\Lambda e^{T\max\{k,1/r^2\}}}{v_0} d_{g(0)}(x,y) \quad when \quad d_{g(0)}(x,y) \geq 2B\sqrt{T}.\]

\end{lem}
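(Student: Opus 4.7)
The plan is to combine Chen's local scalar curvature estimate (Lemma \ref{lem: local scalar curvature lower bound}) with the volume evolution $\partial_t d\mu_{g(t)} = -S\, d\mu_{g(t)}$, the noncollapsing hypothesis (iii), and Bishop--Gromov volume comparison for the initial metric, all organized via a continuity-in-time bootstrap. First I apply Lemma \ref{lem: local scalar curvature lower bound} (using Lemma \ref{lem: distance lower bound} to translate between $B_{g(0)}(p,2r)$ and the $g(s)$-balls for $s \in [0,T]$, which is where (i) and (ii) interact) to deduce a uniform bound $S(g(t)) \geq -\kappa$ with $\kappa := \max\{k,\, C(n,A)/r^2\}$ on the good region. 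Integrating the volume evolution immediately yields the volume inflation estimate
\[ Vol_{g(t)}(\Omega) \leq e^{\kappa T}\, Vol_{g(0)}(\Omega) \]
for any fixed set $\Omega$ that stays inside the good region on $[0,T]$.

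Next I set $C := \Lambda e^{\kappa T}/v_0$ (the target constant in the statement) and, for fixed $x,y \in B_{g(0)}(p,r)$ with $d_0 := d_{g(0)}(x,y)$, define
\[ t^* := \sup\bigl\{t \in [0,T] : d_{g(s)}(x,y) \leq \max\{C d_0,\, C\sqrt{T}\} \text{ for all } s \in [0,t]\bigr\}. \]
Smoothness of $g(\cdot)$ gives $t^* > 0$, and the goal is to rule out $t^* < T$. The role of the enlarged radius $\tilde{r} = Cr$ is precisely that so long as the bootstrap holds, the $g(t)$-minimizing geodesic from $x$ to $y$ together with its $\sqrt{t}$-tube stays inside $B_{g(t)}(p, 2\tilde{r})$, so (ii) and (iii) are in force along it. At the critical time, let $\sigma$ be the $g(t^*)$-minimizing geodesic from $x$ to $y$ of length $D := d_{g(t^*)}(x,y)$, and pick $x_i = \sigma(2i\sqrt{t^*})$ for $i = 0,\ldots,\lfloor D/(2\sqrt{t^*})\rfloor$. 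The $g(t^*)$-balls $B_{g(t^*)}(x_i,\sqrt{t^*})$ are disjoint and each has volume at least $v_0 (t^*)^{n/2}$ by (iii), so their union $\Omega^*$ satisfies $Vol_{g(t^*)}(\Omega^*) \geq \tfrac{1}{2} v_0 D (t^*)^{(n-1)/2}$. By Lemma \ref{lem: distance lower bound}, at time zero the set $\Omega^*$ lies inside $B_{g(0)}(x, D + (1+B)\sqrt{t^*}) \subset B_{g(0)}(p, 2r)$ (ensured by the bootstrap), so combining volume inflation with Bishop--Gromov at time zero yields
\[ \tfrac{1}{2} v_0 D (t^*)^{(n-1)/2} \leq e^{\kappa T}\, V_k\!\bigl(D + (1+B)\sqrt{t^*}\bigr), \]
and choosing $\Lambda$ sufficiently large contradicts $D = C\max\{d_0,\sqrt{T}\}$, forcing $t^* = T$.

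The main obstacle is the final balancing step. Because $V_k(R)$ grows polynomially for small $R$ but exponentially for large $R$, two cases must be treated separately, corresponding to the two branches of the conclusion: when $d_0 \geq 2B\sqrt{T}$ one exploits $D \geq d_0/2$ (from Lemma \ref{lem: distance lower bound}) so both sides of the inequality are at comparable polynomial scales in $D$, while when $d_0 < 2B\sqrt{T}$ one replaces $d_0$ by $\sqrt{T}$ and runs the same argument at scale $\sqrt{T}$. A secondary subtlety is verifying that $\Lambda$ can be chosen independently of the a priori unknown $D$, which is what pins down the precise scaling $e^{\kappa T}/v_0$ appearing in the statement and in the definition of $\tilde{r}$.
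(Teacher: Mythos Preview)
Your overall architecture (Chen's scalar lower bound $\Rightarrow$ volume inflation, noncollapsing $\Rightarrow$ packing of $\sqrt{t}$-balls, Lemma~\ref{lem: distance lower bound} to transfer to $t=0$, Bishop--Gromov to close) matches the paper's, but the geometric step you carry out is inverted relative to the paper and, as written, does not close.

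The gap is in your inequality
\[
\tfrac{1}{2} v_0 D (t^*)^{(n-1)/2} \leq e^{\kappa T}\, V_k\!\bigl(D + (1+B)\sqrt{t^*}\bigr).
\]
The right-hand side has the \emph{wrong growth in $D$}: even in the best (small-radius) regime $V_k(R)\sim \omega_n R^n$, so the right side is of order $D^n$ while the left is linear in $D$; for $\sqrt{k}D$ large it is exponential. Thus the inequality is \emph{satisfied}, not violated, for large $D$, and no contradiction follows. Your remark that ``both sides of the inequality are at comparable polynomial scales in $D$'' is the error: the left side is degree $1$ in $D$, the right side is degree $n$ or worse. (Relatedly, the claimed containment $\Omega^*\subset B_{g(0)}(p,2r)$ is not ensured by the bootstrap: at $t^*$ you only know $D\le C\max\{d_0,\sqrt{T}\}\le 2Cr$, which places $\Omega^*$ in a $g(0)$-ball of radius $\sim Cr$, far larger than $2r$; so even the applicability of Bishop--Gromov with the assumed Ricci bound is in question.)

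The paper avoids this by reversing the roles of the two metrics. Instead of packing $g(t)$-balls along a $g(t)$-geodesic of length $D$ and trying to contain them in a $g(0)$-ball of radius $\sim D$, the paper fixes a \emph{small} $g(0)$-ball $B_{g(0)}(x,B\sqrt{T_1})$, lets $D$ be its $g(T_1)$-diameter, and observes that the $g(T_1)$-$\sqrt{T_1}$-neighborhood of this set is contained (via Lemma~\ref{lem: distance lower bound}) in $B_{g(0)}(x,(1+2B)\sqrt{T_1})$, a ball whose radius is \emph{independent of $D$}. Bishop--Gromov then gives a fixed upper bound $V_k((1+2B)\sqrt{T_1})\sim C(n,k)T_1^{n/2}$, so the packing inequality becomes
\[
\Bigl[\tfrac{D}{2\sqrt{T_1}}\Bigr] v_0 T_1^{n/2} \le e^{\kappa T_1} C(n,k) T_1^{n/2},
\]
which \emph{does} bound $D\le C(n,A)e^{\kappa T_1}v_0^{-1}\sqrt{T_1}$. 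For general $x,y$ one then covers the $g(0)$-geodesic between them by $\sim d_0/(B\sqrt{T_1})$ such small balls and sums the diameter bounds; this is where the two branches of the conclusion come from. Your bootstrap/continuity framework and use of $\tilde r$ are correct in spirit, but the core volume comparison must be carried out at the fixed small scale $(1+2B)\sqrt{T_1}$ rather than at the a priori unknown scale $D$.
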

\begin{proof}
Let's define $\tilde{r}(t) = \Lambda e^{t\max\{k,1/r^2\}}v_0^{-1}r$ for positive $t$. {\blue Note that $\Lambda$ is undefined yet, but for the moment we can choose it to be any large number, for example $100\omega_n$, and later choose the value of $\Lambda$ to be greater than $100\omega_n$ and subject to other requirements. Here it might be helpful to note that we naturally have $v_0 \leq \omega_n$, the volume of a unit $n$-dimensional Euclidean ball.}

{\blue
Since $\tilde{r}(0) = \Lambda v_0^{-1} r > r$ as we choose $\Lambda$ large enough, we have $B_{g(0)}(p,r) \subset B_{g(0)}(p,\tilde{r}(0))$. By the smoothness of the flow, if $\tau>0$ is sufficiently small then we have 
$B_{g(0)}(p, \tilde{r}(0)) \subset B_{g(t)}(p, 2\tilde{r}(0))$ for all $0\leq t < \tau$. Note that $\tilde{r}(t)$ is an increasing function, so we have $B_{g(0)}(p, r) \subset B_{g(t)}(p, 2\tilde{r}(\tau))$ for all $0\leq t < \tau$.
So there exists a number $0 < T_1 \leq T$, such that $T_1$ is the largest number not bigger than $T$ satisfying $B_{g(0)}(p,r) \subset B_{g(t)}(p,2\tilde{r}(T_1))$ for all $t\in[0,T_1]$.
}

We first examine the dilation of geodesic balls with the fixed radius $B\sqrt{T_1}$.
For any $x \in B_{g(0)}(p,r - (1+B)\sqrt{T_1})$, suppose the diameter of $B_{g(0)}(x, B\sqrt{T_1})$ with respect to $g(T_1)$ is $D>0$, then the $\sqrt{T_1}-$neighborhood of $B_{g(0)}(x, B\sqrt{T_1})$ with respect to $g(T_1)$ contains at least $[\frac{D}{2\sqrt{T_1}}]$ disjoint geodesic balls with radius $\sqrt{T_1}$ with respect to $g(T_1)$; on the other hand this neighborhood is contained in $B_{g(0)}(x, (1+2B)\sqrt{T_1})$ by Lemma \ref{lem: distance lower bound}. So by assumption (iii) we have
\[Vol_{g(T_1)} B_{g(0)}(x, (1+2B)\sqrt{T_1}) \geq [\frac{D}{2\sqrt{T_1}}] v_0 T_1^\frac{n}{2},\]
by Lemma \ref{lem: local scalar curvature lower bound} we have
\[\frac{Vol_{g(T_1)} B_{g(0)}(x, (1+2B)\sqrt{T_1})}{Vol_{g(0)} B_{g(0)}(x, (1+2B)\sqrt{T_1})} \leq e^{T_1\max\{(n-1)k, C(n,A)/r^2 \}}.
\]
Hence
\[D \leq \frac{C(n,A)e^{T_1\max\{k,1/r^2\}}}{v_0} \sqrt{T_1},\]
where we have used the Bishop-Gromov volume comparison theorem.

Now for any $x,y \in B_{g(0)}(p,r)$. Suppose $d_{g(0)}(x,y) = l$, then the shortest geodesic connecting $x$ and $y$ can be covered by $[\frac{l}{2B\sqrt{T_1}}]+1$ geodesic balls with radius $B\sqrt{T_1}$, all with respect to $g(0)$. Then the above analysis shows that
\[d_{g(T_1)}(x,y) \leq (\frac{l}{2B\sqrt{T_1}}+1 )\frac{C(n,A)e^{T_1\max\{k,1/r^2\}}}{v_0} \sqrt{T_1}.\]
We can apply Lemma \ref{lem: distance lower bound} on the time interval $[t,T_1]$ to compare $d_{g(t)}(x,y)$ and $d_{g(T_1)}(x,y)$. Now we can choose an appropriate $\Lambda$ {\blue (depending on $n$ and $A$) }and get the claimed result with $T_1$ instead of $T$.

If $T_1 < T$, then the maximality of $T_1$ implies that $B_{g(0)}(p,r)$ has to touch the boundary of $B_{g(T_1)}(p, 2\tilde{r}(T_1))$. However, this will not happen since the above estimate implies $B_{g(0)}(p,r) \subset B_{g(T_1)}(p, \tilde{r}(T_1))$. Therefore $T_1 = T$ and the proof is finished.

\end{proof}

\section{Applications}
In this section, let's restate (for readers' convenience) the applications in the introduction and provide their proofs.

If the optimal Euclidean isoperimetric inequality holds on the entire manifold where the Ricci curvature is nonnegative, then this manifold is isometric to $\mathbb{R}^n$. We can now relax the condition to nonnegative scalar curvature when the Ricci curvature is bounded from below by any negative sub-quadratic function.
\begin{cor}
Let $(M^n,g)$ be a complete Riemannian manifold. Suppose

 (i) $\liminf_{d(p,x) \to \infty }\frac{Ric(x)}{d(p,x)^2} \geq -L >  {\blue -\infty}$ for some fixed point $p$,

(ii) the scalar curvature $S(x) \geq 0$ for all $x\in M$,

(iii) the optimal Euclidean isoperimetric inequality holds on $M$.

Then $(M,g)$ is isometric to the Euclidean space $\mathbb{R}^n$.
\end{cor}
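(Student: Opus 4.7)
The plan is to smooth $(M,g)$ into a Ricci flow via Theorem \ref{thm: short-time existence} and then to exploit the fact that the theorem yields curvature bounds $A/t$ with $A>0$ arbitrary. First, the hypotheses fit those of Theorem \ref{thm: short-time existence} with $k=0$: condition (i) is identical, the scalar bound is $S\ge 0$, and the sharp Euclidean isoperimetric inequality implies the $\delta_1$-almost version for every $\delta_1>0$ in every geodesic ball of every radius $r>0$. Since $k=0$, the quantity $\bar r=\min\{r,\sqrt{1/k},\sqrt{1/(Lk)}\}$ reduces to $\bar r=r$ and can be taken arbitrarily large, so for each fixed $A>0$ the theorem yields a complete immortal Ricci flow $g^{(A)}(t)$, $t\in[0,\infty)$, with $g^{(A)}(0)=g$ and $|Rm|(g^{(A)}(t))\le A/t$. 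By Lemma \ref{lem: local scalar curvature lower bound} applied with $k=0$ and $r\to\infty$, the nonnegativity $S(g^{(A)}(t))\ge 0$ persists along the flow.

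Next I would set up a Bishop--Gromov sandwich at positive times. On one hand, the bound $Ric(g^{(A)}(t))\ge -(n-1)A/t$ together with Bishop--Gromov volume comparison gives
\[
Vol_{g^{(A)}(t)}(B_{g^{(A)}(t)}(p,\rho)) \le \omega_n\rho^n\bigl(1+O(A\rho^2/t)\bigr).
\]
On the other hand, since the initial data satisfy the $\delta_1$-almost Euclidean isoperimetric inequality for every $\delta_1>0$, Proposition \ref{prop: short-time noncollapsing}, applied with $\epsilon$ arbitrarily small and $Q$ arbitrarily large, furnishes the matching lower bound
\[
Vol_{g^{(A)}(t)}(B_{g^{(A)}(t)}(p,\rho)) \ge \omega_n\rho^n
\]
for every $\rho>0$ and every $t$ in the (arbitrarily large) admissible range.

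Taking $A\to 0$ through the family $\{g^{(A)}\}$, the upper bound tightens to $\omega_n\rho^n$ and meets the lower bound, so the Bishop--Gromov ratio equals $1$ at every scale in the limit. The pointed Cheeger--Gromov limit of $(M,g^{(A_j)}(t),p)$ is then isometric to $(\mathbb R^n,p_0)$; passing to the limit $t\to 0$ via the distance distortion bounds of Lemmas \ref{lem: distance lower bound} and \ref{lem: distance upper bound} (both of which become trivial as $A\to 0$ and $t\to 0$) identifies this limit with $(M,g)$. Finally, a complete flat Riemannian manifold satisfying the sharp Euclidean isoperimetric inequality cannot be a nontrivial quotient of $\mathbb R^n$ (any such quotient would have sub-Euclidean volume growth and violate the bound for large domains), so $(M,g)$ must be isometric to $\mathbb R^n$.

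The main obstacle is the $A\to 0$ step: a priori the Ricci flows $g^{(A_j)}$ need not coincide, so one must either invoke a uniqueness theorem under the curvature bound $|Rm|\le A/t$ (which would identify all $g^{(A)}$ with a single $g(t)$ and force $|Rm|(g(t))\equiv 0$ directly), or carry out the Cheeger--Colding rigidity for the sequence of distinct flows $g^{(A_j)}$ and match the limit metric with $g$ at $t=0$. Keeping track of how the noncollapsing constants in Proposition \ref{prop: short-time noncollapsing} depend simultaneously on $A$, $\epsilon$, and $Q$ as these parameters are driven to their limits is the technical heart of the argument.
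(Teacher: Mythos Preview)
Your overall strategy---produce Ricci flows with $|Rm|\le A/t$, send $A\to 0$, and conclude flatness of the initial metric---is exactly the paper's. The difference is in the execution, and the paper's route is both shorter and avoids the obstacle you flag at the end.

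The paper does not attempt a Bishop--Gromov sandwich or invoke Proposition~\ref{prop: short-time noncollapsing} at all. Instead it takes the sequence of \emph{space--time} solutions $\{g_i(t)\}_{t\in[0,\infty)}$ with $|Rm|(g_i(t))\le A_i/t$ and passes directly to a smooth Cheeger--Gromov limit on $M\times[0,\infty)$, using the same local interior estimates already invoked in the proof of Theorem~\ref{thm: short-time existence} (B.-L. Chen's local curvature estimate and the modified Shi estimates). Because every $g_i(0)=g$, on each compact subset of $M$ the curvature at $t=0$ is bounded uniformly in $i$, and these estimates propagate that bound forward in time uniformly in $i$. Hence the limit is smooth \emph{up to $t=0$} and still has $g$ as initial data. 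Since the limit satisfies $|Rm|\equiv 0$ for $t>0$, continuity at $t=0$ forces $g$ itself to be flat. One line then finishes: the isoperimetric hypothesis gives Euclidean volume growth, so a flat $(M,g)$ must be $\mathbb{R}^n$.

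Your volume-ratio route has a real gap as written. Proposition~\ref{prop: short-time noncollapsing} requires an initial Ricci lower bound $Ric(g(0))\ge -k/r^2$ on $B_{g(0)}(p,r)$, which under hypothesis (i) you only get with $k$ of order $r^4$; the resulting $\tau=\tau(n,k,Q,\epsilon)$ then degenerates as you push $Q\to\infty$ and $\epsilon\to 0$, and there is no evident way to obtain the lower bound $\omega_n\rho^n$ at \emph{every} scale $\rho$ for a fixed $t>0$. Likewise, Lemma~\ref{lem: distance upper bound} carries the factor $e^{T\max\{k,1/r^2\}}$, which blows up for the same reason. The very obstacle you isolate---identifying the $A\to 0$ limit with $(M,g)$ at $t=0$---is precisely what the local interior estimates handle cleanly without any global Ricci lower bound or noncollapsing input; once you see that, the Bishop--Gromov sandwich, the persistence of $S\ge 0$, and the tracking of $(A,\epsilon,Q)$-dependencies all become unnecessary.
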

\begin{proof}
{\blue 
Take a decreasing sequence of positive numbers $A_i \to 0$, and an increasing  sequence of numbers $r_j \to \infty$. For each pair of $A_i$ and $r_j$, 
Theorem \ref{thm: short-time existence} provides a complete Ricci flow solution $g_{i, j}(t)$ on $M$ with
\[|Rm|(g_{i,j}(t)) \leq \frac{A_i}{t} + \frac{1}{c(n,A_i)r_j^2}, \quad t\in (0, c(n,A_i)r_j^2).\]
Note that since {\blue $k=0$} in this case, the time intervals of existence no longer depend on $k$ or $L$.

Now for each fixed $i$ and $T > 0$, we can take $j \to \infty$,  a subsequence of $g_{i,j}(t)$ will converge in the pointed Cheeger-Gromov sense to a complete smooth solution $g_{i}(t)$, $t\in[0,T] $ for the same reason as explained in the proof of Theorem \ref{thm: short-time existence}. As $T$ is arbitrary, we can use a diagonal argument to construction a limit solution with existence time interval $[0, \infty)$, for simplicity we still denote it as $g_i(t)$. Since $r_j \to \infty$,  $g_i(t)$ satisfies the curvature bound
\[|Rm|(g_i(t)) \leq \frac{A_i}{t}, \quad t \in(0,\infty).\]
Then take $i \to \infty$, a subsequence of $g_i(t)$ will converge in the pointed Cheeger-Gromov sense to a complete smooth solution $g(t)$. Since we have the curvature upper bounds
\[|Rm|(g_i(t)) \leq \frac{A_i}{t} \to 0 \quad as \quad i\to \infty \quad for \quad t>0,\]
the limit solution $g(t)$ must be flat for any $t>0$, thus the initial metric must be flat. The isoperimetric inequality implies maximal volume growth, hence the initial manifold is isometric to $\mathbb{R}^n$.

}
\end{proof}

We also prove a local result concerning the topology of almost isoperimetrically Euclidean geodesic balls with Ricci curvature bounded from below.
\begin{cor}
For any $n$ and $k$, {\blue there} exist constants $\delta$ and $\eta$ depending only on $n$ and $k$, such that if the geodesic ball $B(p,1)$ is relatively compact in a Riemannian manifold $(M,g)$, $Ric \geq -k $ on $B(p,1)$, and the $\delta-$almost Euclidean isoperimetric inequality holds in $B(p,1)$, then $B(p,\eta)$ is diffeomorphic to a Euclidean ball of dimension $n$.
\end{cor}
\begin{proof}
Suppose the claim is not true, then there is a sequence of manifolds $\{(M_i,g_i)\}$ with geodesic balls $\{B_i(p_i, 1)\}$ satisfying the assumptions with $\delta_i \to 0$, but the largest geodesic ball centered at $p_i$ that is diffeomorphic to a Euclidean ball is $B_i(p_i, \epsilon_i)$, and $\epsilon_i \to 0$ as $i \to \infty$.

Define $\tilde{g}_i=\epsilon_i^{-2}g_i$, then $B_{\tilde{g}_i}(p_i, \epsilon_i^{-1}) = B_i(p_i,1)$ has radius $\epsilon_i^{-1} \to \infty$, $Ric(\tilde{g}_i) \geq -\epsilon_i^2 k \tilde{g}_i$, and the $\delta_i-$almost isoperimetric inequality still holds.

Choose a sequence $A_j \to 0$. For each $A_j$, we can find $i$ large enough, such that we can apply Lemma \ref{lem: existence of a good conformal metric} to conformally transform $B_{\tilde{g}_i}(1, \epsilon_i^{-1})$ to a complete manifold $(M_i, h_i)$ while keeping $B_{\tilde{g}_i}(p_i, \epsilon_i^{-1}/2)$ unchanged, and we can apply Theorem \ref{thm: short-time existence} to produce a complete solution of Ricci flow $(M_i, h_i(t))$ for $t\in[0,1]$ with
\[|Rm|(h_i(t)) \leq \frac{A_j}{t}, \quad t\in(0,1].\]
Moreover, $h_i(t)$ is $\omega_n/2$-noncollapsed under the scale $Q\sqrt{t}$ near the point $p_i$ for all $t\in(0,\tau]$, where $\tau$ is from Lemma \ref{lem: short-time noncollapsing}.

We can extract a subsequence $\{M_j, h_j(\tau), p_j\}$ which converges smoothly in the pointed Cheeger-Gromov sense to $(M_\infty, h_\infty,  p_\infty)$, which is a quotient of $\mathbb{R}^n$, and
\[Vol_{g_\infty}B_{h_\infty}(p_\infty, Q) \geq \frac{\omega_n}{2} Q^n.\]
Thus there exists a number $a(n) \in (0,1)$, such that $B_{g_\infty}(p_\infty, aQ)$ is diffeomorphic to an $n-$d Euclidean ball. This topological information can be lifted to $B_{h_i(\tau)}(p_i, aQ)$ when $i$ is large enough.

On the other hand, Lemma \ref{lem: distance upper bound} implies that there is a constant $\Lambda(n)$, such that $B_{h_i(0)}(p_i, 2) \subset B_{h_i(\tau)}(p_i, \Lambda)$. Therefore, if we choose $Q\geq \Lambda/a$, then we have a contradiction with the assumption that $B_{h_i(0)}(p_i, 1+\epsilon)$ is not diffeomorphic to an $n$-d Euclidean ball for any $\epsilon>0$.

\end{proof}

{\blue
Next we discuss the application to Yau\rq{}s conjecture. The following corollary is a special case of G. Liu\rq{}s recent result \cite{liu2016gromov}. 
\begin{cor}\label{cor: nonnegative BK}
For any $n$, there is $\delta(n)>0$, such that if $(M,g)$ is a complete $2n-$dimensional K\"ahler manifold with nonnegative holomorphic bisectional curvature and maximal volume growth, and the $\delta(n)-$almost Euclidean isoperimetric inequality holds in every geodesic ball with radius $r$, then $(M,g)$ is biholomorphic to $\mathbb{C}^n$.
\end{cor}
}
\begin{proof}
It has been proved in \cite{huang2015k} that under the curvature bound $|Rm(g(t))|\leq \frac{A}{t}$ with $A$ sufficiently small, the K\"ahler condition and the nonnegativity of the holomorphic bisectional curvature are preserved. We only need to check that the maximal volume growth is preserved under the Ricci flow solutions constructed in Theorem \ref{thm: short-time existence}, which is done by the following Lemma \ref{lem: maximal volume growth kahler}. Then we have a complete K\"ahler metric with bounded nonnegative holomorphic bisectional curvature and maximal volume growth, which is biholomorphic to $\mathbb{C}^n$ by results in \cite{chau2006complex}.
\end{proof}

In \cite{huang2015k}, the authors applied the Ricci flow solution of \cite{simon2002deformation}, which is uniformly $C^0$ close to the initial metric, hence the persistence of maximal volume growth is immediate.

For the Ricci flow solutions in this article, suppose the initial Ricci curvature is bounded from below, then the  persistence of maximal volume growth can be proved using results of Section 3. However, since nonnegative holomorphic bisectional curvature implies nonnegative Ricci curvature, there is an easy proof and the non-collapsing scale can be made uniform in this situation, see the following lemma.
\begin{lem}\label{lem: maximal volume growth kahler}
Let $(M,g(t))$ be a Ricci flow solution {\blue on an $m$-dimensional manifold} with $Ric(g(t)) \geq 0$ and $|Rm|(g(t))\leq \frac{A}{t}$, and the $\delta-$almost isoperimetric inequality holds in $B_{g(0)}(x,r)$ for all $x\in M$. Suppose
\[Vol_{g(0)}B_{g(0)}(p, s) \geq v s^m\]
{\blue for $s > 0$}. Then there is a $\tau>0$ depending only on $n$ and $A$, such that
\[Vol_{g(t)} B_{g(t)} (x, s) \geq c(m)v s^m,\]
for $t\in[0,\tau]$, {\blue for $s > 0$}.
\end{lem}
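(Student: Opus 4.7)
The plan is to combine nonnegative Ricci (which makes distances monotonically nonincreasing), Perelman's short-time noncollapsing at scale $\sqrt{t}$, and the distance distortion estimate of Lemma \ref{lem: distance lower bound}, via a ball-packing argument at the initial time.

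First, since $Ric(g(t))\geq 0$ we have $\partial_t g\leq 0$ and hence $d_{g(t)}(x,y)\leq d_{g(0)}(x,y)$, which gives the containment $B_{g(0)}(x,s)\subseteq B_{g(t)}(x,s)$. This reduces the task to producing a lower bound for $Vol_{g(t)}$ of the time-$0$ ball. The curvature bound $|Rm|\leq A/t$ together with the $\delta$-almost Euclidean isoperimetric hypothesis at $t=0$ put us in the setting of Theorem \ref{thm: pseudolocality}, so Lemma \ref{lem: Perelman's short-time noncollapsing} supplies the uniform noncollapsing $Vol_{g(t)}B_{g(t)}(y,\sqrt{t})\geq \kappa(n)t^{n/2}$ for every $y\in M$ and $0<t\leq \tau(n,A)$.

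Next, fix $x\in M$ and a large $s$, set $R:=4(n-1)(A+1)+2$, and pick a maximal $R\sqrt{t}$-separated subset $\{y_1,\dots,y_N\}$ of $B_{g(0)}(x,s)$ at the time-$0$ metric. By Lemma \ref{lem: distance lower bound} (whose hypothesis is trivial here because $|Rm|\leq A/t$ is global), the set remains $\sqrt{t}$-separated at time $t$, so the balls $B_{g(t)}(y_i,\sqrt{t}/2)$ are pairwise disjoint; they also lie in $B_{g(t)}(x,s+\sqrt{t}/2)$ since distances decrease. Summing the noncollapsing contributions gives
\[ Vol_{g(t)}B_{g(t)}(x,s+\sqrt{t}/2)\geq N\,\kappa(n)\,2^{-n}t^{n/2}. \]
On the other hand, maximality forces $B_{g(0)}(x,s)\subseteq\bigcup_i B_{g(0)}(y_i,R\sqrt{t})$, and Bishop-Gromov at time $0$ (using $Ric\geq 0$) bounds each covering ball by $\omega_n R^n t^{n/2}$, so
\[ N\geq \frac{Vol_{g(0)}B_{g(0)}(x,s)}{\omega_n R^n t^{n/2}}. \]

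Finally, the hypothesis $Vol_{g(0)}B_{g(0)}(p,s)\geq v s^m$ transfers to the basepoint $x$ via the containment $B_{g(0)}(p,s-d_{g(0)}(x,p))\subseteq B_{g(0)}(x,s)$, giving $Vol_{g(0)}B_{g(0)}(x,s)\geq(1-o(1))v s^m$ as $s\to\infty$. Chaining the three estimates yields $Vol_{g(t)}B_{g(t)}(x,s+\sqrt{t}/2)\geq c(n,A)\,v\,s^m$ for large $s$, and since the $\sqrt{t}/2$ shift is lower order as $s\to\infty$, the claimed asymptotic bound follows after a harmless decrease of the constant. The proof is essentially careful bookkeeping rather than hard analysis; the main subtlety is synchronizing the noncollapsing scale $\sqrt{t}$, the $O(\sqrt{t})$ correction from Lemma \ref{lem: distance lower bound}, and the large-scale limit $s\to\infty$. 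Strictly speaking the constant depends on $(n,A)$, but in the K\"ahler uniformization application $A$ is itself chosen as a function of $n$, which justifies the notation $c(n)$.
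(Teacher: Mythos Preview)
Your argument is correct and follows the same ball--packing skeleton as the paper: pack disjoint $g(0)$--balls inside $B_{g(0)}(x,s)$, use the distance distortion from Lemma~\ref{lem: distance lower bound} together with $Ric\ge 0$ to keep them disjoint at time $t$ inside $B_{g(t)}(x,s)$, and sum a small--scale volume lower bound.

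The genuine difference is in how the small--scale noncollapsing is obtained. The paper does \emph{not} quote Lemma~\ref{lem: Perelman's short-time noncollapsing}; instead it first proves a separate Claim that $Vol_{g(t)}B_{g(t)}(y,s)\ge\tfrac{\omega_{2n}}{2}s^{2n}$ for all $0<s<r$, by a contradiction argument using Gromov--Hausdorff convergence and Cheeger--Colding continuity of volume (essentially reusing the mechanism behind Lemma~\ref{lem: short-time noncollapsing}). It then packs at the fixed scale $r$ rather than at scale $\sqrt{t}$. Two consequences: (i) the paper's final constant is genuinely $c(n)$, independent of $A$, because the packing radius $r$ never meets the distortion constant $4(n-1)(A+1)$ in the volume count; (ii) the paper's route is heavier, bringing in GH limits and measure convergence. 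Your route is shorter and more elementary, at the price of the constant $c(n,A)$; as you note, this is harmless for the intended application since $A=A(n)$ there. One small point: Lemma~\ref{lem: Perelman's short-time noncollapsing} is stated for complete bounded--curvature flows, so to be fully precise you should either invoke the $\kappa(n)$--noncollapsing already built into Theorem~\ref{thm: short-time existence}, or note that it passes to the limit from the approximating bounded--curvature solutions.
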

\begin{proof}
The isoperimetric condition implies that (WLOG assume $\delta<<1$ )
\[Vol_{g(0)}B_{g(0)}(x,s) \geq (1-\delta)\omega_{m} s^{m}.\]
\emph{Claim:} there is a $\tau>0$ such that for any $0<t<\tau$, and for any $x$ and $0< s <r$
\[Vol_{g(t)}B_{g({\blue t})}(x,s) \geq \frac{\omega_{m}}{2} s^{m}.\]
\emph{Proof of Claim}. 
{\blue 
If the claim is not true, then there is a sequence of times $\tau_i \to 0$, a sequence of points $x_i$, and a sequence of radii $s_i < r$ such that
\[Vol_{g(\tau_i)}B_{g(\tau_i)}(x_i,s_i) < \frac{\omega_{m}}{2} s_i^{m}.\]
$Ric(g(t)) \geq 0$ implies that the volume ratio is a non-increasing function of the radius, which is also known to be continuous, hence
\[Vol_{g(\tau_i)}B_{g(\tau_i)}(x_i, r) < \frac{\omega_{m}}{2} r^{m}.\]
On the other hand we have
\[
Vol_{g(t)}B_{g(t)}(x_i, r) \to Vol_{g(0)}B_{g(0)}(x_i,r) \geq(1-\delta)\omega_{m} r^m \quad as \quad t \to 0.
\]
Thus we can choose $\tau_i$ smaller, and still denote them by $\tau_i$ for simplicity, such that
\[Vol_{g(\tau_i)}B_{g(\tau_i)}(x_i, r) = \frac{\omega_{m}}{2} r^{m}.\]
Then by the monotonicity of volume ratio we also have
\[Vol_{g(\tau_i)}B_{g(\tau_i)}(x_i, s) \geq \frac{\omega_{m}}{2} s^{m}\]
for any $0< s < r$.

}
The sequence of pointed geodesic balls with the Riemannian distance and measure $(B_{g(0)}(x_i, r), d_{g(0)}, d\mu_{g(0)})$ converges in the Gromov-Hausdorff sense to a metric measure space $(X,d_X, d\mu_X)$, where $d\mu_X$ is the Hausdorff measure. Here the convergence of the measures follows from \cite{cheeger1997structure}. On this limit space we have
\[Vol_XB_X(x_\infty, s) \geq (1-\delta)\omega_{m}.\]
On the other hand, the sequence $(B_{g(\tau_i)}(x_i, r) , d_{g(\tau_i)}, d\mu_{g(\tau_i)})$ converges to a metric measure space $(Y, d_Y, d\mu_Y)$, where $d\mu_Y$ is the Hausdorff measure. We have
\[Vol_Y B_Y(x_\infty, r) = \frac{\omega_{m}}{2} r^{m}.\]
However, by the curvature conditions and Lemma \ref{lem: distance lower bound}, we have
\[d_{g(0)}(x,y) \geq d_{g(t)}(x,y) \geq d_{g(0)}(x,y) - C(m)A\sqrt{t},\]
for any $x,y\in M$ and $0<t<T$. The continuous surjection $f:Y \to X$ constructed in the proof of Lemma \ref{lem: short-time noncollapsing} is an isometry in this situation. Hence we have the same $n$-dimensional Hausdorff measure on $X$ and $Y$, which leads to a contradiction and finishes the proof of the claim.

Now let's assume that the initial manifold has the following volume growth rate
\[Vol_{g(0)}B_{g(0)}(x, s) \geq v s^m.\]
Let $N(r,s)$ be the maximal number of disjoint $r-$geodesic balls with respect to $g(0)$ in $B_{g(0)}(x, s)$, then there are $N(r,s)$ geodesic balls with radius $2r$ that cover $B_{g(0)}(x,s)$. Hence
\[N(r,s) \geq \frac{vs^m}{\omega_{m} (2r)^{m}}.\]
Take $t< \tau$ and small enough so that $d_{g(t)} \geq d_{g(0)} - r/2$, then
\[B_{g(t)}(y, r/2) \subset B_{g(0)}(y, r), \]
hence there are at least $N(r,s)$ disjoint geodesic balls with radius $r/2$ w.r.t $g(t)$ in $B_{g(0)}(x, s) \subset B_{g(t)}(x,s)$. Therefore
\[Vol_{g(t)} B_{g(t)} (x, s) \geq N(r,s) \frac{\omega_{m}}{2} \frac{r}{2}^{m} \geq c(m)v s^m.\]
\end{proof}

\bibliographystyle{plain}
\bibliography{ref}
\end{document}